\documentclass[12pt]{amsart}
\usepackage{array,multirow}
\usepackage{pdfsync}
\usepackage[utf8]{inputenc}
\usepackage[T1]{fontenc}
\usepackage{amsthm,amsmath,amssymb}
\usepackage[backend=bibtex,style=numeric,doi=false,url=false]{biblatex}
\bibliography{eigene,andere,arxiv,mathscinet}

\usepackage{hyperref}
\usepackage{cleveref}
\creflabelformat{enumi}{(#2#1#3)}
\crefname{enumi}{}{}
\usepackage{listings}
\usepackage{color}
\lstset{language=GAP,rangeprefix=\#\&\ ,rangesuffix=\ \#\&,includerangemarker=false}
 \newtheorem{Thm}{Theorem}[section]
 \newtheorem{Lem}[Thm]{Lemma}
 \newtheorem{LemDef}[Thm]{Lemma and Definition}
 \newtheorem{Prop}[Thm]{Proposition}
 \newtheorem{Cor}[Thm]{Corollary}
\theoremstyle{remark}

 \newtheorem{Rem}[Thm]{Remark}
\theoremstyle{definition}
 \newtheorem{Def}[Thm]{Definition}
\numberwithin{equation}{section}

\newcommand\Z{\mathbb Z}
\newcommand\CC{\mathbb C}
\newcommand\PSL{\operatorname{PSL}}
\newcommand\ol[1]{\overline{#1}}

\newcommand\adhit{\triangleright}

\newcommand\inv{^{-1}}
\def\HM#1.#2.#3.#4.{{^{#1}_{#3}\mathcal M^{#2}_{#4}}}

\newcommand\ot{\otimes}

\newcommand\Ind{\operatorname{Ind}}

\newcommand\Irr{\operatorname{Irr}}

\newcommand\CMC[4]{M^{#1}(#2,#3,#4)}
\newcommand\Cl{\operatorname{Cl}}
\newcommand\rCl{\underline{\Cl}}
\newcommand\rP{\underline P}

\newcommand\cp{P}
\newcommand\cpc{\ol P}
\newcommand\rcpc{\underline P}
\newcommand\dcf{double class function}
\newcommand\dcfs{double class functions}
\newcommand\be[2]{\beta_{#1}(#2)}
\newcommand\adams[1]{\psi_{#1}}
\newcommand\CF[1]{\mathbb Q(\zeta_{#1})}
\begin{document}
 \title[]{Higher Frobenius-Schur indicators for Drinfeld doubles of finite groups through characters of centralizers}
 \author{Peter Schauenburg}
 \address{Institut de Math{\'e}matiques de Bourgogne --- UMR 5584 du CNRS\\
 Universit{\'e} Bourgogne Franche-Comté\\
 F-21000 Dijon\\France}
 \email{peter.schauenburg@u-bourgogne.fr}
 \subjclass[2010]{18D10,16T05,20C15}
 \keywords{Hopf algebra, Frobenius-Schur indicator, Drinfeld double}
 \begin{abstract}
   We present a new approach to calculating the higher Frobenius-Schur indicators for the simple modules over the Drinfeld double of a finite group. In contrast to the formula by Kashina-Sommerhäuser-Zhu that involves a sum over all group elements satisfying a certain condition, our formula operates on the level of conjugacy classes and character tables. It can be implemented in the computer algebra system GAP, efficiently enough to deal, on a laptop, with symmetric groups up to $S_{18}$ (providing further evidence that indicators are non-negative in this case) or simple groups of order up to $2\cdot 10^8$. The approach also allows us to test whether all indicators over the double of a given group are rational, without computing them. Among simple groups of order up to about $5\cdot 10^{11}$ an inspection yields exactly one example (of order about $5\cdot 10^9$) where irrational indicators occur.
 \end{abstract}
 \maketitle

\section{Introduction}
\label{sec:introduction}

If the reader has seen Drinfeld doubles of finite groups and Frobenius-Schur indicators, the present paper's title will seem to her to announce an utter triviality: Higher Frobenius-Schur indicators are invariants of (simple) objects of suitable fusion categories. Here, we are dealing with the indicators defined in \cite{MR1808131} for the (simple) modules over a semisimple complex Hopf algebra, and more specifically over the Drinfeld double $D(G)$ of a finite group. Those simple modules are well known to be parametrized by the irreducible characters of the centralizers in $G$ of elements of $G$. It thus seems natural that their indicators should also be described in terms of those irreducible characters of centralizers, and indeed the groundbreaking paper \cite{MR2213320} already gives formulas for the indicators involving the description alluded to above.

So what is new in the present paper?

The formula in \cite{MR2213320} for Frobenius-Schur indicators for (the simple modules over) the Drinfeld double of the complex group algebra of a finite group was studied in  \cite{MR3275646,MR2941565,MR2925444,MR3178056,MR3103664}. Let $V$ be a simple $D(G)$-module described by a couple $(g,\eta)$ where $g\in G$, and  $\eta$ is a character of the centralizer $C_G(g)$. Then the formula in question does describe the $m$-th Frobenius-Schur indicator $\nu_m(V)$ in terms of $\eta$ and $C_G(g)$. However, it does not stay on the level of characters and conjugacy classes, but rather involves a sum over all elements $x$ of $G$ satisfying the somewhat awkward condition $x^m=(gx)^m$. This equation seems to defy nice structural analysis, it is just satisfied or not by $x\in G$, solutions do not form a subgroup, or conjugacy class, or other nice or nicely parametrized subset. Attempts to calculate indicators explicitly, and this includes calculating them with the help of computer algebra systems ---GAP \cite{GAP4} being a prime choice--- have to face the awkward equation and usually have to find, by nothing much more sophisticated than element-wise inspection, the solutions $x$ for given $g$.

The present paper will give an indicator formula that does not involve individual group elements, as it were, but rather only information ``on the character table level'', that is, the behavior of conjugacy classes and characters. There is a certain price to pay: We need to procure the character tables of all the centralizers of elements of $G$. But then, all the characters of all possible centralizers are needed to describe all the simples of $D(G)$ in the first place, so it should be no surprise if all this information is needed to calculate the indicators. To acknowledge the formula's drawback more clearly: To compute the indicators of the objects associated to the conjugacy class of only \emph{one} element $g$, we now need the character tables of \emph{all} (or at least some of) the other centralizers, not only of the centralizer of $g$.

Apart from being a new way to calculate indicators, the formula also leads to criteria testing whether all the indicators for the double of a given group are (rational) integers, or, in the terminology of Iovanov, Mason, and Montgomery \cite{MR3275646}, whether a given group is $FSZ$. These criteria do not involve all of the character tables of all centralizers, and are thus more efficient for their purpose than computing all the indicators outright.

We are able to demonstrate that the novel way to calculate indicators is vastly more efficient than previous methods when put to work in a computer algebra system such as GAP \cite{GAP4}. This relies, of course, on the fact that GAP is breathtakingly efficient in calculating character tables, and thus on the hard work put at the present author's fingertips by the creators of GAP. When comparing our method to the previously available methods, this reliance on GAP is not an issue, simply because knowledge of the character tables of centralizers is necessary anyhow if one wants to describe the simple modules of the Drinfeld double. 

The present paper contains no precise analysis of the efficiency of our GAP code in comparison to the code developed, say, in Courter's thesis \cite{MR3103664} or used for example in \cite{MR3275646}. We believe, however, that the few comparisons that we will draw in \cref{sec:applications} are convincing enough without precise analysis: We can deal with groups so much larger than those previously manageable, and the time needed for treating examples that were previously considered very large is such a small fraction of the time needed using the previously available methods that it seems out of the question to attribute all but an insignificant part of the improvement to the usual development of hardware over time. Once we consider the operations of finding conjugacy classes and character tables (and, as we shall find necessary, conjugating concrete elements into each other) as ``provided'', it is clear where the immense efficiency leap comes from: In a reasonably noncommutative group, there can be many orders of magnitude between the number of conjugacy classes (also of the centralizers in that group) and the number of elements of the group.

We will thus put an algorithm based on the indicator formula into GAP code. A certain amount of data management seems useful to avoid having to calculate certain ingredients needed for the calculation over and over again for the different indicators.

Motivated by a result on representations of symmetric groups by Scharf \cite{MR1113784}, Rebecca Courter \cite{MR3103664} investigated the conjecture that the indicators of simple modules over the double of a symmetric group (proven to be integers in \cite{MR3275646}) are nonnegative. She verified this experimentally up to $S_{10}$ and had to give up the pursuit on hardware available to her beyond that. With our approach can verify the conjecture up to $S_{18}$. 

We also verified ``naively'' (by computing all the indicators) that every simple group of order less than $200,000,000$ is $FSZ$. Here the author's laptop refused to continue, stalling during the computation of a character table. The question as to whether every simple group is an $FSZ$-group was put forward as ``[t]he most tantalizing open question'' on $FSZ$-groups in \cite{MR3275646}. Since our rationality criteria do not need \emph{all} the character tables (and can do with storing much less information at a time), they allow us to pursue the question further than the order cited above, and indeed we find that the exceptional Chevalley group $G_2(5)$ of order $5,859,000,000$ is not $FSZ_5$. It is the only counterexample we found while continuing the search up to order
$499,631,102,880$.

{\bf Acknowledgements:} The author thanks Susan Montgomery and Marc Keilberg for their comments, and Rebecca Courter for sharing her GAP code and data on the doubles of symmetric groups. Some calculations were performed using HPC resources from DSI-CCUB (Université de Bourgogne).
\section{Preliminaries and notation}
\label{sec:prel-notat}

We write $t\adhit x:=txt\inv $ for conjugation in a group, but often $x^G$ for a conjugacy class. $\Cl(G)$ denotes the set of conjugacy classes of $G$, and $\Irr(G)$ the set of irreducible complex characters. If $H\leq G$ is a subgroup and $t\in G$, so $t\adhit H$ is a conjugate subgroup, then for a character $\eta$ of $H$ we denote by $t\adhit\eta$ the character of $t\adhit H$ defined by $(t\adhit \eta)(x)=\eta(t\inv\adhit x)$. 

We denote by $\adams r$ the Adams operator on the algebra of class functions on $G$, defined by $\adams r(\chi)(g)=\chi(g^r)$. When $r$ is coprime to the exponent of $G$, then $\adams r$ preserves characters, and $\adams r(\chi)(g)=\sigma_r(\chi(g))$ where $\sigma_r$ is the automorphism of the cyclotomic field $\CF{\exp(G)}$ defined by sending a root of unity to its $r$-th power. We write $o(g)$ for the order of a group element $g$.
\section{Counting formulas}
\label{sec:counting-formulas}

As we shall review below, a key ingredient in the formula from \cite{MR2213320} computing Frobenius-Schur indicators of the modules over the Drinfeld double $D(G)$ of a finite group $G$ is the cardinality of the set
\begin{equation}
  \label{eq:1}
  G_m(g,z):=\{x\in G|x^m=(gx)^m=z\},
\end{equation}
where $g\in G$ is involved in the parametrization of the simple modules over $D(G)$. This section will deal with counting the elements of $G_m(g,z)$. We note that the equation $x^m=(gx)^m$ for fixed $g$ whose solutions determine the sets does not lend itself easily to group-theoretical reasoning, as it were: Its solutions do not form a subgroup or union of conjugacy classes. The condition is an equivalent rewriting of $g(x\adhit g)(x^2\adhit g)\dots(x^{m-1}\adhit g)=e$, which is the form used in \cite{MR2213320}; but the mapping, depending on $x$, whose ``kernel'' should thus contain $g$ is not a group homomorphism and lacks good properties (although expressions of this type are not unknown in group theory, see \cite{MR0108532}). 

We will derive counting formulas for $G_m(g,z)$ that are character theoretic: They are not based on inspecting all the elements of $G$ for picking out solutions, but on manipulations on the level of conjugacy classes, characters, and class functions ---on the centralizers of elements of $G$.

We start by noting, see \cite{MR3275646}, that $G_m(g,z)$ is empty, unless $g$ and $z$ commute in $G$, or in other words, belong to each other's centralizer in $G$.
As it turns out, the first step to successfully dealing with our problem is very simple: Instead of looking at $|G_m(g,z)|$ as functions of $z$, parametrized by $g$, we consider these cardinalities as functions of $g$ parametrized by $z$. To emphasize this trivial change of viewpoint notationally, we define
\begin{equation}
  \label{eq:2}
  \gamma_m^z\colon C_G(z)\to \CC;g\mapsto |G_m(g,z)|,
\end{equation}
and we observe:
\begin{LemDef}
  The map $\gamma_m^z$ is a class function on $C_G(z)$. Thus we can write
  \begin{equation}\label{eq:3}
    \gamma_m^z=\sum_{\chi\in\Irr(C_G(z))}\be m{z,\chi}\chi
  \end{equation}
  for $g\in C_G(z)$, with coefficients $\be m{z,\chi}\in\CC$.
\end{LemDef}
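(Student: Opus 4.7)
The statement splits into a lemma part (that $\gamma_m^z$ is a class function on $C_G(z)$) and a definitional part (that it therefore admits the stated expansion in irreducible characters). Only the first part requires argument; the second is immediate from the fact that $\Irr(C_G(z))$ is a basis of the space of class functions on $C_G(z)$.

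To show $\gamma_m^z$ is a class function, I would take $g \in C_G(z)$ and $t \in C_G(z)$, and prove that $\gamma_m^z(t \adhit g) = \gamma_m^z(g)$ by exhibiting an explicit bijection $G_m(g,z) \to G_m(t\adhit g, z)$, namely conjugation by $t$. That is, I would send $x \mapsto t\adhit x = txt\inv$ and check that it lands in the right set: we have $(t\adhit x)^m = t\adhit(x^m) = t\adhit z = z$ using $t \in C_G(z)$, and similarly $((t\adhit g)(t\adhit x))^m = (t\adhit(gx))^m = t\adhit((gx)^m) = t\adhit z = z$. The map is clearly a bijection (its inverse is conjugation by $t\inv$, which is also in $C_G(z)$), so it witnesses $|G_m(g,z)| = |G_m(t\adhit g,z)|$.

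The only subtle point to be careful about is that conjugation is performed by an element $t$ of $C_G(z)$ rather than of all of $G$; this is exactly what is needed so that $z$ is preserved by the conjugation (otherwise $(t\adhit x)^m = t\adhit z$ would not equal $z$). Nothing else needs to be verified: once $\gamma_m^z$ is established as a class function on $C_G(z)$, its expansion \eqref{eq:3} in terms of the basis $\Irr(C_G(z))$ of the class-function space is automatic, and the coefficients $\be m{z,\chi} \in \CC$ are determined by the usual inner product $\be m{z,\chi} = \langle \gamma_m^z, \chi\rangle_{C_G(z)}$. No real obstacle is expected; the argument is a direct verification.
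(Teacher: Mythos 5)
Your proof is correct and takes essentially the same (natural) approach implicit in the paper, which leaves the statement as an unproved observation; the conjugation-by-$t\in C_G(z)$ bijection you describe is exactly the one the paper later invokes in a more general form, noting that $G_m(t\adhit g, t\adhit z) = t\adhit G_m(g,z)$.
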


Further, for a group $H$ and $a,b,c\in H$, let $\CMC Habc$ denote the class multiplication coefficient which counts how many ways there exist to write $c\in H$ as a product $xy$ where $x$ and $y$ belong to the conjugacy classes $a^H$ and $b^H$, respectively. This depends only on the conjugacy class of $c$ (and by definition only on the conjugacy classes of $a$ and $b$.)

\begin{Lem}\label{prop:1}
  Assume that $g$ and $z$ commute in $G$, and $m$ is an integer. Then
  \begin{equation}
    \label{eq:4}
    \gamma_m^z(g)=\sum_{\substack{a,b\in\rCl(C_G(z))\\a^m=b^m=z}}\CMC{C_G(z)}a{b\inv}g.
  \end{equation}
  for any set $\rCl(C_G(z))$ of representatives of the conjugacy classes of $C_G(z)$.
\end{Lem}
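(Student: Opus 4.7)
The plan is to establish a bijection between $G_m(g,z)$ and a set of pairs of elements in $H := C_G(z)$, and then to partition that set by conjugacy classes in $H$, recognizing the sizes of the resulting fibers as class multiplication coefficients.

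First I would observe that $G_m(g,z)\subseteq H$: if $x^m=z$, then $x$ commutes with its own power $z$. Since $g\in H$ by hypothesis, the substitution $u:=gx$, $v:=x$ defines a bijection
\[
G_m(g,z)\;\xrightarrow{\sim}\;\bigl\{(u,v)\in H\times H \,\bigm|\, u^m=v^m=z,\ uv\inv =g\bigr\},
\]
with inverse $(u,v)\mapsto v$. (Note that $u=gx\in H$ as well, being an element whose $m$-th power equals $z$.) This turns the awkward equation $x^m=(gx)^m$ into two independent conditions on two variables in $H$, linked only by the simple relation $uv\inv=g$.

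Next I would check that the condition ``$a^m=z$'' is invariant on $H$-conjugacy classes: if $a'=h\adhit a$ for $h\in H$, then $(a')^m=h\adhit a^m=h\adhit z=z$ because $h$ centralizes $z$. Therefore I can partition the bijected set according to the $H$-conjugacy classes of $u$ and $v$, summing over pairs of class representatives $(a,b)\in\rCl(H)\times\rCl(H)$ with $a^m=b^m=z$:
\[
\gamma_m^z(g)\;=\;\sum_{\substack{a,b\in\rCl(H)\\a^m=b^m=z}}\bigl|\{(u,v)\in a^H\times b^H\,\bigm|\,uv\inv=g\}\bigr|.
\]

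Finally, I would identify each inner cardinality with $\CMC{H}{a}{b\inv}{g}$ by the change of variable $w:=v\inv$, which bijects $b^H$ with $(b\inv)^H$ and turns $uv\inv=g$ into $uw=g$; this is exactly the defining count of the class multiplication coefficient. No step is really an obstacle; the only conceptual point requiring care is the invariance of ``$a^m=z$'' under $H$-conjugation, but this is immediate from $H=C_G(z)$. The proof is essentially a reindexing, justifying the paper's observation that switching the roles of $g$ (now the variable) and $z$ (now the parameter) is the key trick.
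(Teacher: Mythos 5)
Your proof is correct and follows essentially the same route as the paper's: replace $x$ by the pair $(gx,x)\in C_G(z)^2$, partition by conjugacy classes in $C_G(z)$, and read off the inner counts as class multiplication coefficients. You are somewhat more explicit than the paper about the details (the $C_G(z)$-invariance of the condition $a^m=z$, and the final substitution $w=v^{-1}$ matching the definition of $\CMC{H}{a}{b^{-1}}{g}$), but there is no substantive difference in the argument.
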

\begin{proof}
    Whenever $x^m=(gx)^m=z$, then clearly $x$ and $gx$ commute with $z$, so
  \begin{align*}
    |\{x\in G&x^m=(gx)^m=z\}|\\
    &=\left|\left\{\left.(x,y)\in C_G(z)^2\right|x^m=y^m=z\;\land\; y=gx\right\}\right|\\
    &=\sum_{\substack{a,b\in\rCl(C_G(z))\\a^m=b^m=z}}\left|\left\{\left.(x,y)\in b^{C_G(z)}\times a^{C_G(z)}\right|g=yx\inv\right\}\right|\\
    &=\sum_{\substack{a,b\in\rCl(C_G(z))\\a^m=b^m=z}}\CMC{C_G(z)}a{b\inv}g,
  \end{align*}
\end{proof}

The formula of the lemma has the advantage of expressing the cardinalities of the $G_m$ sets in terms of information on the character table level: Not individual elements, but rather their conjugacy classes are summed over. The class multiplication coefficients are also defined on the conjugacy class level. In fact the class multiplication coefficients contain essentially the same information as the character table (the two data determine each other, this is discussed in detail in \cite[Ch.21]{MR1183468}).

We will proceed to use Burnside's classical formula expressing class multiplication coefficients in terms of character values in order to derive a formula for $\gamma_m^z$ in terms of character values. In fact we will write out several easily equivalent such expressions for the coefficient $\be m{z,\chi}$ below. We could not quite decide which way of writing them we should prefer.

\begin{Def}
  Let $G$ be a finite group and $z\in G$. For an integer $m$, define 
  \begin{equation}
    w^z_m\colon C_G(z)\to\CC; x\mapsto \begin{cases}1 &\text{ if }x^m=z\\0&\text{ otherwise}
    \end{cases}
  \end{equation}
  It is easy to see that $w^z_m$ is a class function on $C_G(z)$.

  For a character $\chi$ of $C_G(z)$ and an integer $m$ let
  \begin{equation}\phi_m(\chi,z)=\sum_{\substack{x\in C_G(z)\\x^m=z}}\chi(x)=\sum_{\substack{a\in\rCl(C_G(z))\\a^m=z}}|a^{C_G(z)}|\chi(a).
  \end{equation}
\end{Def}
\begin{Lem}\label{prop:2}
  Let $G$ be a finite group, $z\in G$, and $\chi$ an irreducible character of $C_G(z)$. Then
\begin{align}
  \label{eq:5}
  \phi^z_m(\chi)&=|C_G(z)|\langle \chi,w^z_m\rangle\\
  \label{eq:6} |\phi^z_m(\chi)|&=|C_G(z)|\cdot|\chi w^z_m|\\
  \label{eq:7}\be m{z,\chi}&=\frac{|\phi^z_m(\chi)|^2}{|C_G(z)|\chi(e)}\\
  \label{eq:8}              &=\frac{|\langle\chi,w^z_m\rangle|^2|C_G(z)|}{\chi(e)}\\
  \label{eq:9}              &=\frac{|\chi w_m^z|^2|C_G(z)|}{\chi(e)}.
  \end{align}
\end{Lem}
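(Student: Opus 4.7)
The plan is to prove the five equalities in sequence; once \eqref{eq:5} and \eqref{eq:7} are in hand, the remaining three are essentially bookkeeping. Equation \eqref{eq:5} is immediate from unfolding the Hermitian inner product on class functions of $C_G(z)$: since $w^z_m$ takes values in $\{0,1\}$, it equals its own complex conjugate, so
\[
|C_G(z)|\langle \chi,w^z_m\rangle = \sum_{x\in C_G(z)}\chi(x)w^z_m(x) = \sum_{x^m=z}\chi(x) = \phi_m^z(\chi).
\]
Taking absolute values delivers \eqref{eq:6}, provided one reads the notation $|\chi w^z_m|$ as $|\langle\chi,w^z_m\rangle|$.

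The substantive step is \eqref{eq:7}, where I would substitute Burnside's classical formula
\[
\CMC{H}{a}{b}{c}=\frac{|a^H|\,|b^H|}{|H|}\sum_{\chi\in\Irr(H)}\frac{\chi(a)\chi(b)\overline{\chi(c)}}{\chi(e)}
\]
into \cref{prop:1} (with $H=C_G(z)$ and $b$ replaced by $b\inv$, using $|(b\inv)^H|=|b^H|$), then interchange the $(a,b)$-sum with the $\chi$-sum and observe that the two inner sums factor. The $a$-factor collapses, via summing over each conjugacy class, to $\sum_{x^m=z}\chi(x)=\phi_m^z(\chi)$, while the $b$-factor collapses to $\sum_{x^m=z}\chi(x\inv)=\overline{\phi_m^z(\chi)}$ using the standard identity $\chi(x\inv)=\overline{\chi(x)}$. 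The product is $|\phi_m^z(\chi)|^2$, yielding
\[
\gamma_m^z(g)=\sum_\chi\frac{|\phi_m^z(\chi)|^2}{|C_G(z)|\,\chi(e)}\,\overline{\chi(g)}.
\]

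Comparing this expansion with $\gamma_m^z=\sum_\chi\be m{z,\chi}\,\chi$ from \eqref{eq:3} requires reindexing by the involution $\chi\mapsto\overline{\chi}$ on $\Irr(C_G(z))$. Since $\overline{\chi}(e)=\chi(e)$ and $\phi_m^z(\overline{\chi})=\overline{\phi_m^z(\chi)}$, the coefficient $|\phi_m^z(\chi)|^2/(|C_G(z)|\chi(e))$ is invariant under this involution, so after reindexing one reads off \eqref{eq:7}. Equations \eqref{eq:8} and \eqref{eq:9} are then \eqref{eq:7} transcribed through \eqref{eq:5} and \eqref{eq:6} respectively. The main subtlety to watch is the complex-conjugation bookkeeping: Burnside's formula naturally produces $\overline{\chi(g)}$ rather than $\chi(g)$, and the $|\cdot|^2$ in the coefficient is precisely what makes it invariant under $\chi\mapsto\overline{\chi}$ and thus compatible with expanding against $\chi(g)$.
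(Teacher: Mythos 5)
Your proof is correct and follows essentially the same route as the paper: both substitute Burnside's class‑multiplication formula into \cref{prop:1}, convert the class sums to element sums, factor the inner sums into $\phi_m^z(\chi)\overline{\phi_m^z(\chi)}=|\phi_m^z(\chi)|^2$, and then compare with \eqref{eq:3}. The only difference is cosmetic: the paper performs the reindexing $\chi\mapsto\overline\chi$ silently (writing $\chi(g)$ in place of $\overline\chi(g)$ in the final line of its displayed chain without comment), whereas you make that step explicit and justify it by noting that the coefficient $|\phi_m^z(\chi)|^2/(|C_G(z)|\chi(e))$ is invariant under the involution. Making this explicit is a genuine improvement in clarity, and your reading of the notation $|\chi w_m^z|$ as $|\langle\chi,w_m^z\rangle|$ is the natural one consistent with \eqref{eq:5}--\eqref{eq:6}.
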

\begin{proof}
  We use Burnside's formula \cite[Ch.21, Thm.1.2]{MR1183468} expressing class multiplication coefficients in terms of characters:
  \begin{equation*}
    \label{eq:10} \CMC{C_G(z)}a{b\inv}g=\frac{|a^{C_G(z)}|\cdot|b^{C_G(z)}|}{|C_G(z)|}\sum_{\chi\in\Irr(C_G(z))}\frac{\chi(a)\ol\chi(b)\ol\chi(g)}{\chi(e)}
  \end{equation*}
  giving
  \begin{align*} |G_m(g,z)|&=\sum_{\substack{a,b\in\rCl(C_G(z))\\a^m=b^m=z}}\frac{|a^{C_G(z)}|\cdot|b^{C_G(z)}|}{|C_G(z)|}\sum_{\chi\in\Irr(C_G(z))}\frac{\chi(a)\ol\chi(b)\ol\chi(g)}{\chi(e)}\\
                           &=\frac1{|C_G(z)|}\sum_{\substack{x,y\in C_G(z)\\x^m=y^m=z}}\sum_{\chi\in\Irr(C_G(z))}\frac{\chi(x)\ol{\chi(y)}\ol\chi(g)}{\chi(e)}\\
                           &=\frac1{|C_G(z)|}\sum_{\chi\in\Irr(C_G(z))}\frac{\ol\chi(g)}{\chi(e)}\sum_{\substack{x,y\in C_G(z)\\x^m=y^m=z}}\chi(x)\ol\chi(y)\\
                           &=\frac1{|C_G(z)|}\sum_{\chi\in\Irr(C_G(z))}\frac{\ol\chi(g)}{\chi(e)}\left(\sum_{\substack{x\in C_G(z)\\x^m=z}}\chi(x)\right)\left(\sum_{\substack{y\in C_G(z)\\y^m=z}}\ol\chi(y)\right)\\
                           &=\frac1{|C_G(z)|}\sum_{\chi\in\Irr(C_G(z))}\frac{\chi(g)}{\chi(e)}|\phi_m^z(\chi)|^2\\
  \end{align*}
This proves \eqref{eq:7}, and \eqref{eq:8}, \eqref{eq:9} are only variations based on \eqref{eq:6}, \eqref{eq:7}, which in their turn are trivial.
\end{proof}

\begin{Lem}\label{prop:3}
  Let $G$ be a finite group and $z\in G$. Put $e(z):=\exp(C_G(z))$. Then
  \begin{enumerate}
  \item $\gamma_{m+e(z)}^z=\gamma_m^z$ for all $m$.
  \item \label{item:1}If $(a,e(z))=1$ then $\gamma_{am}^{z^a}=\gamma_m^z$, $\adams a\gamma_m^{z^a}=\gamma_m^z$, and $\adams a\gamma_{m}^z=\gamma_{am}^z$.
  \item \label{item:3}If $m|e(z)$ and $mo(z)\not|e(z)$ then $\gamma_m^z=0$.
  \item \label{item:2}The class functions $\gamma_m^z$ are determined by those where $m|\frac{e(z)}{o(z)}$.
  \item \label{item:4}If $(m,e(z))=1$, then $\gamma_m^z(g)=\delta_{g,1}$.
  \end{enumerate}
\end{Lem}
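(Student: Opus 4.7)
The plan is to handle the five parts in the order (1), (3), (2), (4), (5), since (2) is the crux and parts (4) and (5) rely on it. The hardest step will be the third identity in (2), $\adams a \gamma_m^z = \gamma_{am}^z$: the solution sets defining $\gamma_m^z(g^a)$ and $\gamma_{am}^z(g)$ admit no obvious bijection, because the natural candidate $y \mapsto y^b$ on $C_G(z)$ (with $b$ an inverse of $a$ modulo $e(z)$) does not respect the formation of $gy$. I expect to route around this obstacle by passing to the character-theoretic formula for $\be m{z,\chi}$ from \cref{prop:2}.

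Parts (1) and (3) are immediate from the definition. For (1), any $x$ with $x^m = z$ lies in $C_G(z)$ and satisfies $x^{e(z)} = 1$, so $x^{m+e(z)} = x^m$; the same applies to $gx$ when $g \in C_G(z)$. For (3), if $x^m = z$ with $m \mid e(z)$, then $z^{e(z)/m} = x^{e(z)} = 1$, forcing $m\, o(z) \mid e(z)$; the contrapositive gives the claim.

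For (2), the starting observation is that $\gcd(a,e(z)) = 1$ implies $\gcd(a,o(z)) = 1$, whence $C_G(z^a) = C_G(z)$ and $e(z^a) = e(z)$; let $b$ be an inverse of $a$ modulo $e(z)$. For the first identity $\gamma_{am}^{z^a} = \gamma_m^z$, I would show the defining sets actually coincide: for $x \in C_G(z)$, $x^{am} = z^a$ implies $x^m = (x^{am})^b \cdot x^{(1-ab)m} = z^{ab} \cdot 1 = z$ (using $e(z) \mid ab-1$ and $x^{e(z)} = 1$); the converse is trivial, and the same reasoning applies with $gx$ in place of $x$. For the third identity $\adams a \gamma_m^z = \gamma_{am}^z$, the bijection $x \mapsto x^b$ from $\{x \in C_G(z) : x^m = z\}$ onto $\{y \in C_G(z) : y^{am} = z\}$ (inverse $y \mapsto y^a$) yields $\phi_m^z(\adams b \chi) = \phi_{am}^z(\chi)$, hence via \eqref{eq:7} the equality $\be{am}{z,\chi} = \be m{z,\adams b \chi}$. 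Re-indexing the expansion $\adams a \gamma_m^z = \sum_\chi \be m{z,\chi}\,\adams a \chi$ by the permutation $\adams a$ of $\Irr(C_G(z))$ then gives the desired identity. The remaining second identity $\adams a \gamma_m^{z^a} = \gamma_m^z$ follows by applying the third to $z^a$ and combining with the first.

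For (4), by (1) I may assume $0 \le m < e(z)$. Let $d = \gcd(m,e(z))$ and write $e(z) = d e_1$, $m = d m_1$ with $\gcd(m_1,e_1) = 1$; a short Chinese Remainder argument produces $k$ with $\gcd(k,e(z)) = 1$ and $k \equiv m_1 \pmod{e_1}$, so that $m \equiv kd \pmod{e(z)}$. Then (1) together with (2) yield $\gamma_m^z = \adams k \gamma_d^z$. This reduces everything to $\gamma_d^z$ with $d \mid e(z)$, and (3) further kills those $d$ with $d\, o(z) \nmid e(z)$, leaving $d \mid e(z)/o(z)$. For (5), when $\gcd(m,e(z)) = 1$, (2) gives $\gamma_m^z = \adams m \gamma_1^z$; directly from the definition, $\gamma_1^z(g) = |\{x : x = gx = z\}|$ is $1$ for $g = 1$ and $0$ otherwise, so $\gamma_m^z(g) = \delta_{g^m,1}$, which equals $\delta_{g,1}$ since $o(g) \mid e(z)$ and $\gcd(m,e(z)) = 1$ force $\gcd(m,o(g)) = 1$.
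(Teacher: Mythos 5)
Your proof is correct and follows essentially the same route as the paper's. The one small organizational difference is in part (2): the paper first proves $\adams a\gamma_m^{z^a}=\gamma_m^z$ by noting $w_m^z=\adams a w_m^{z^a}$ and pushing this through the character-theoretic formula for $\be m{z,\chi}$, then obtains $\adams a\gamma_m^z=\gamma_{am}^z$ by composing with the first identity; you instead prove $\adams a\gamma_m^z=\gamma_{am}^z$ first via the bijection $x\mapsto x^b$ on solution sets (equivalently $\phi_m^z(\adams b\chi)=\phi_{am}^z(\chi)$), then deduce the remaining identity. These are two faces of the same character-theoretic manipulation, and the rest of the argument (periodicity, the reduction via $m'=\gcd(m,e(z))$ with a unit $a$ chosen so that $am\equiv m'\pmod{e(z)}$, the divisibility argument for (3), and the direct evaluation of $\gamma_1^z$) matches the paper.
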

\begin{proof}
  The periodicity in $m$ is trivial. If $(a,e(z))=1$, then $x^m=z$ is equivalent to $x^{am}=z^a$ and $(gx)^m=z$ equivalent to $(gx)^{am}=z^a$ for $x\in C_G(z)$, proving the first equality in \cref{item:1}. For the second, note that $w_m^z=\adams aw_m^{z^a}$ and calculate
  \begin{align*}
    \sum_{\chi\in\Irr(C_G(z))}\frac{|\langle\chi,w_m^z\rangle|^2}{\chi(e)}\chi
    &=\sum_{\chi\in\Irr(C_G(z))}\frac{|\langle\adams a\chi,w_m^z\rangle|^2}{\chi(e)}\adams a\chi\\
    &=\sum_{\chi\in\Irr(C_G(z))}\frac{|\adams a\chi,\adams a (w_m^{z^a})\rangle|^2}{\chi(e)}\adams a\chi\\
    &=\sum_{\chi\in\Irr(C_G(z))}\frac{|\langle\chi,w_m^{z^a}\rangle|^2}{\chi(e)}\adams a\chi\\
  \end{align*}
  so that $\adams a\gamma_m^{z^a}=\gamma_m^z$ and thus $\adams a\gamma_m^z=\adams a\gamma_{am}^{z^a}=\gamma_{am}^z$.

  If $m'=(m,e(z))$ then one can choose $a,b$ with $m'=am+be(z)$ such that $(a,e(z))=1$. (This is surely elementary and common knowledge, but for completeness: If $am\equiv m'\mod e(z)$ then also $(a+k\frac{e(z)}{m'})m\equiv m'\mod e(z)$ for every integer $k$. Choose $k$ such that every prime dividing $e(z)$ but not $\frac{e(z)}{m'}$ divides either $a$ or $k$ but not both; then $a+k\frac{e(z)}{m'}$ is relatively prime to $e(z)$.) Now $\gamma_{m'}^z=\gamma_{am}^z=\adams a\gamma_m^z$, so that $\gamma_m^z$ is determined by $\gamma_{m'}^z$.

  Assume $m|e(z)$ and $\gamma_m^z\neq 0$; thus there is $x\in C_G(z)$ with $x^m=z$. But then $z^{e(z)/m}=x^{e(z)}=1$ and thus $o(z)|(e(z)/m)$ proving \cref{item:3} and thus what remained of \cref{item:2}. Finally \cref{item:4} amounts to the trivial observation that $\gamma_1^z(g)=\delta_{g,1}$
\end{proof}

\section{Indicator formulas for characters of the double}
\label{sec:indic-form-char}

We recall that simple modules over the Drinfeld double $D(G)$ of a finite group are parametrized by pairs $(g,\eta)$ where $g\in G$ and $\eta\in\Irr(C_G(g))$. Conjugate elements $g$ yield isomorphic simple modules: If $t,g\in G$ then $C_G(t\adhit g)=t\adhit C_G(g)$, and thus (irreducible) characters of $C_G(t\adhit g)$ are in bijection with (irreducible) characters of $C_G(g)$; the (simple) $D(G)$-module corresponding to $(g,\eta)$ is isomorphic to the one corresponding to $(t\adhit g,t\adhit\eta)$.

The following lemma sums up the formulas for the indicators of simple modules for the Drinfeld double of a finite group as found in \cite{MR2213320,MR3275646}:
\begin{Lem}
  Let $G$ be a finite group, $g\in G$, and $\eta$ a character of the centralizer $C_G(g)$. Then the $m$-th Frobenius-Schur indicator of the $D(G)$-module associated to $g$ and $\eta$ is
  \begin{align}
    \label{eq:11}
    \nu_m(g,\eta)&=\frac1{|C_G(g)|}\sum_{\substack{x\in G\\x^m=(gx)^m}}\eta(x^m)\\
                 &=\frac1{|C_G(g)|}\sum_{z\in C_G(g)}|G_m(g,z)|\eta(z)\label{eq:12}.
  \end{align}
\end{Lem}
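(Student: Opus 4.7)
The statement is presented as a summary of prior results; accordingly the strategy is to attribute the hard part (the first equality) to the literature and then derive the second equality by a bookkeeping step.

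First, I would recall that \eqref{eq:11} is exactly the Kashina--Sommerhäuser--Zhu formula as made explicit in \cite{MR2213320}, and further used in \cite{MR3275646}; so nothing needs to be reproved for that displayed equation. One might add a brief sanity check that the summand $\eta(x^m)$ makes sense: whenever $x^m=(gx)^m$, the element $z:=x^m$ is a power of both $x$ and $gx$, hence commutes with both, hence with $g=(gx)x\inv$, so $z\in C_G(g)$ and $\eta(z)$ is defined.

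Next, to obtain \eqref{eq:12} from \eqref{eq:11}, I would partition the summation domain by the common value $z:=x^m=(gx)^m$. By the observation above this $z$ ranges over $C_G(g)$ (the contribution of any $z\notin C_G(g)$ being empty). Since $\eta(x^m)=\eta(z)$ is constant on each fiber of this partition, we have
\begin{align*}
\sum_{\substack{x\in G\\x^m=(gx)^m}}\eta(x^m)
 &=\sum_{z\in C_G(g)}\;\sum_{\substack{x\in G\\ x^m=(gx)^m=z}}\eta(z)\\
 &=\sum_{z\in C_G(g)}|G_m(g,z)|\,\eta(z),
\end{align*}
by the definition \eqref{eq:1} of $G_m(g,z)$. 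Dividing by $|C_G(g)|$ gives \eqref{eq:12}.

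There is no real obstacle here: the first equality is quoted from the literature, and the second is just grouping the summation by the value of $x^m$. The only point that needs a word is the verification that the summand only depends on $z$, which is immediate from $\eta$ being a class function (in fact, even an ordinary function) on $C_G(g)$ and from $z$ lying in $C_G(g)$.
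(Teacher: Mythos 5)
Your proposal is correct and takes the same approach as the paper, which states this lemma without proof, merely citing \cite{MR2213320,MR3275646} for \eqref{eq:11}. Your observation that \eqref{eq:12} follows by partitioning the sum according to the common value $z=x^m=(gx)^m$, together with the sanity check that $z\in C_G(g)$, is exactly the routine bookkeeping the paper leaves implicit.
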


After the results of the previous section, we already have a formula for the indicators of the doubles of finite groups that is based solely on conjugacy classes and characters:
\begin{Cor}\label{cor1}
  Let $G$ be a finite group, $g\in G$, and $\eta$ a character of $C_G(g)$. Then
  \begin{align}\label{eq:13} \nu_m(g,\eta)
    &=\frac1{|C_G(g)|}\sum_{z\in C_G(g)}\gamma_m^z(g)\eta(z)\\
    &=\frac1{|C_G(g)|}\sum_{z\in\rCl(C_G(g))}|z^{C_G(g)}|\gamma_m^z(g)\eta(z)\\
    \intertext{ with }
 \gamma_m^z&=|C_G(z)|\sum_{\chi\in\Irr(C_G(z))}\frac{|\langle\chi,w^z_m\rangle|^2}{\chi(e)}\chi.
  \end{align}
  where $w_m^z(x)=\delta_{x^m,z}$.
\end{Cor}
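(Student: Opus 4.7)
The proof will just chain together the formula (12) from the cited lemma with the results established in \cref{sec:counting-formulas}, so no new ideas are really needed — the task is bookkeeping.

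First I would start from the Kashina–Sommerhäuser–Zhu formula in its form \eqref{eq:12}. Since $z$ runs over $C_G(g)$, we have in particular that $g$ and $z$ commute, so $g\in C_G(z)$ and the definition \eqref{eq:2} of $\gamma_m^z$ applies, yielding $|G_m(g,z)|=\gamma_m^z(g)$. Substituting produces the first equality of the corollary immediately.

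Next I would group the sum over $z\in C_G(g)$ by conjugacy classes of $C_G(g)$. For this to make sense I must check that the function $z\mapsto \gamma_m^z(g)\eta(z)$ is constant on $C_G(g)$-conjugacy classes. Class-invariance of $\eta$ is automatic since $\eta$ is a character of $C_G(g)$. For $\gamma_m^z(g)$ the point is that for any $t\in C_G(g)$, conjugation $x\mapsto t\adhit x$ restricts to a bijection $G_m(g,z)\to G_m(g,t\adhit z)$: if $x^m=(gx)^m=z$, then using $tg=gt$ one has $(t\adhit x)^m=t\adhit z$ and $(g(t\adhit x))^m=(t\adhit(gx))^m=t\adhit z$. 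Hence $|G_m(g,z)|=\gamma_m^z(g)$ depends only on $z^{C_G(g)}$, and grouping the sum gives the second equality.

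Finally, to obtain the displayed expression for $\gamma_m^z$, I would combine the definition \eqref{eq:3} with the formula \eqref{eq:8} of \cref{prop:2}, which reads $\be m{z,\chi}=|\langle\chi,w_m^z\rangle|^2|C_G(z)|/\chi(e)$. Substituting and factoring out $|C_G(z)|$ gives exactly the claimed expansion; the identification $w_m^z(x)=\delta_{x^m,z}$ is just the definition of $w_m^z$.

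There is really no hard part: the only substantive verification is the class-invariance of $|G_m(g,z)|$ in the variable $z\in C_G(g)$, and this is a one-line conjugation argument that crucially uses $t\in C_G(g)$ (without which $t\adhit$ would not commute with left multiplication by $g$ inside the $m$-th power). Everything else is direct substitution from the results proved earlier in this section.
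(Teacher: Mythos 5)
Your proof is correct and takes essentially the same route the paper intends: substitute \eqref{eq:2} into \eqref{eq:12} for the first line, group by $C_G(g)$-classes after checking that $z\mapsto\gamma_m^z(g)$ is constant on them, and combine \eqref{eq:3} with \eqref{eq:8} for the expansion of $\gamma_m^z$. The class-invariance verification (conjugation by $t\in C_G(g)$ gives a bijection $G_m(g,z)\to G_m(g,t\adhit z)$, using $tg=gt$) is exactly the point worth making explicit — the paper records the underlying identity $G_m(t\adhit g,t\adhit z)=t\adhit G_m(g,z)$ only in the proof of the subsequent lemma on $\nu_m(\Xi)$ — and you prove it correctly.
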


We note that to calculate the indicators of the objects associated to one conjugacy class (of g), we need to have the character tables of other elements as well. If we are interested in the indicators for one specific object (or conjugacy class), this may be a serious drawback in comparison to the formula \eqref{eq:11}. If, however, we are interested in all the indicators of the modules for the double of $G$, then we need ``all'' centralizers and their characters just to describe the objects. After this attempt to defend the merits of \cref{cor1}, we need to point out an aspect that needs to be improved: To describe the simple modules of the double, we need the character table of the centralizer of one representative of each conjugacy class of $G$. But \eqref{eq:13} involves the character table of the centralizer of a representative of each conjugacy class in $C_G(g)$; distinct such representatives may be conjugate in $G$. Their centralizers are thus conjugate in $G$, but not equal. The formula requires the character tables of all these conjugate centralizers separately, thus, if put blindly into GAP code, say, requires the computation of many redundant character tables.

We will now rewrite the indicator formula \eqref{eq:13} in terms of characters of the Drinfeld double of $G$, after recalling briefly the character theory of $D(G)$.

Let $\cp(G):=\{(x,y)\in G^2|xy=yx\}$ denote the set of pairs of
commuting elements in $G$. The group $G$ acts on $\cp(G)$ by
simultaneous conjugation $t\adhit (x,y)=(t\adhit x,t\adhit y)$ and we denote
by $\cpc(G)$ the set of orbits under this action; we will write $(x,y)^G$ for the orbit of $(x,y)$. By $\rcpc(G)$ we
will denote a chosen set of representatives of $\cpc(G)$.

A \dcf\ is a function $f\colon\cp(G)\to\CC$ invariant under
simultaneous conjugation, i.~e.\ a function that is invariant on the classes in
$\cpc(G)$.

A representation of the Drinfeld double $D(G)$ has a character which
is a \dcf. If the representation is described by the pair $(g,\eta)$
with $\eta$ a character of $C_G(g)$, then the corresponding \dcf\ is
\begin{align}
  \Xi\colon G\times G&\to\CC\notag\\
  (x,y)&\mapsto\label{eq:14}
         \begin{cases}
           \chi(tyt\inv)&\text{ if }txt\inv=y\\
           0&\text{ if }x\text{ is not conjugate to }y.
         \end{cases}
\end{align}
The characters of the irreducible representations of $D(G)$ form a basis of the space of \dcfs.

\begin{Lem}
  Let $\Xi$ be a character of the Drinfeld double $D(G)$ of a finite
  group $G$, and $m$ an integer. Then
  \begin{align}\label{eq:16} \nu_m(\Xi)&=\sum_{(h,z)\in\rP(G)}\frac1{|C_G(h,z)|}
                            \gamma^z_m(h)
                                         \Xi(h,z)\\
    &=\frac1{|G|}\sum_{(h,z)\in P(G)}\notag
                            \gamma^z_m(h)
                                         \Xi(h,z)
  \end{align}
\end{Lem}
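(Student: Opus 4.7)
The plan is to reduce by linearity to the case where $\Xi$ is an irreducible character of $D(G)$, parametrized by some pair $(g,\eta)$ with $g\in G$ and $\eta\in\Irr(C_G(g))$, and then to recognize the right-hand side of \eqref{eq:16} as the formula \eqref{eq:13} for $\nu_m(g,\eta)$ from \cref{cor1}. Since the irreducible characters of $D(G)$ form a basis of the space of \dcfs\ and both sides of \eqref{eq:16} are linear in $\Xi$, this reduction is harmless.

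First I would verify that the integrand $(h,z)\mapsto\gamma^z_m(h)\Xi(h,z)$ is itself a \dcf. The function $\Xi$ is one by hypothesis. For $\gamma^z_m$, a direct substitution $x=t\adhit y$ in the defining equations $x^m=(hx)^m=z$ shows $G_m(t\adhit h,t\adhit z)=t\adhit G_m(h,z)$, so $\gamma^{t\adhit z}_m(t\adhit h)=\gamma^z_m(h)$. The equivalence of the two expressions in \eqref{eq:16} then follows from orbit-stabilizer: each orbit of a pair $(h,z)\in P(G)$ under simultaneous conjugation has size $|G|/|C_G(h,z)|$, where $C_G(h,z):=C_G(h)\cap C_G(z)$ is the stabilizer, so summing over $P(G)$ and dividing by $|G|$ yields the same result as summing over $\rP(G)$ weighted by $1/|C_G(h,z)|$.

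It remains to compute $\frac1{|G|}\sum_{(h,z)\in P(G)}\gamma^z_m(h)\Xi(h,z)$ when $\Xi$ is irreducible, associated to $(g,\eta)$. By \eqref{eq:14}, $\Xi(h,z)$ vanishes unless $h\in g^G$, so only pairs with $h=t\adhit g$ and $z\in C_G(h)=t\adhit C_G(g)$, i.e.\ $z=t\adhit w$ with $w\in C_G(g)$, contribute. I would parametrize these pairs via the map
\[G\times C_G(g)\longrightarrow\{(h,z)\in P(G):h\in g^G\},\quad(t,w)\longmapsto(t\adhit g,t\adhit w),\]
which is surjective, and whose fibers all have cardinality $|C_G(g)|$ (because $t\adhit g=t'\adhit g$ forces $t\inv t'\in C_G(g)$, and then the second coordinate is determined up to the corresponding conjugation). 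Under this parametrization, \eqref{eq:14} gives $\Xi(t\adhit g,t\adhit w)=\eta(w)$, and the invariance noted above gives $\gamma^{t\adhit w}_m(t\adhit g)=\gamma^w_m(g)$. The sum therefore collapses to
\[\frac{1}{|G|}\cdot\frac{|G|}{|C_G(g)|}\sum_{w\in C_G(g)}\gamma^w_m(g)\eta(w),\]
which equals $\nu_m(g,\eta)$ by \eqref{eq:13}.

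There is no real obstacle here; the argument is essentially bookkeeping. The only point needing genuine verification is the conjugation-invariance of $\gamma^z_m(h)$, which is immediate from the definition of $G_m(h,z)$, together with the constancy of the fiber size for the parametrization, which is exactly the usual orbit-stabilizer count for the conjugation action of $G$ on $g^G$.
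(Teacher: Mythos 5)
Your proof is correct and follows essentially the same route as the paper's: reduce by linearity to an irreducible $\Xi$ associated to $(g,\eta)$, exploit the conjugation-invariance $\gamma_m^{t\adhit z}(t\adhit h)=\gamma_m^z(h)$ coming from $G_m(t\adhit h,t\adhit z)=t\adhit G_m(h,z)$, and collapse the sum to the expression for $\nu_m(g,\eta)$ in \cref{cor1}. The only cosmetic difference is that the paper collapses the orbit-representative form by choosing $\rcpc(G)=\bigsqcup_{f\in\rCl(G)}\{f\}\times\rCl(C_G(f))$, whereas you collapse the averaged form via a direct $|C_G(g)|$-to-one parametrization of the support of $\Xi$; the counting is the same.
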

\begin{proof}
  It suffices to show the claim when $\Xi$ is the character associated to $g\in G$ and $\eta\in\Irr(C_G(g))$.

  We have $\gamma_m^{t\adhit z}=t\adhit\gamma_m^z$ for $z,t\in G$ since $G_m(t\adhit g,t\adhit z)=t\adhit G_m(g,z)$ for $(g,z)\in\cp(G)$.
  Therefore,
  \begin{equation*} \sum_{(h,z)\in\rcpc(G)}\frac1{|C_G(h,z)|}
    \gamma_m^z(h)
    \Xi(h,z)
    =
    \frac1{|G|}\sum_{(h,z)\in\cp(G)}
    \gamma_m^z(h)
    \Xi(h,z)
  \end{equation*}
  is independent of the choice $\rcpc(G)$ of representatives, and it suffices to prove the first formula for one particular choice of representatives; we opt for
  \begin{equation*}
    \rcpc(G)=\bigsqcup_{f\in\rCl(G)}\{f\}\times\rCl(C_G(f)).
  \end{equation*}
  where $\rCl(G)$ is chosen to contain $g$. But then
  \begin{multline*} \sum_{(h,z)\in\rcpc(G)}\frac1{|C_G(h,z)|}
    \gamma_m^z(h)
    \Xi(h,z)
    \\=\sum_{h\in\rCl(G)}\sum_{z\in\rCl(C_G(h))}\frac1{|C_G(h,z)|}
    \gamma_m^z(h)
    \Xi(h,z)
    \\=\sum_{z\in\rCl(C_G(g))}\frac1{|C_G(g,z)|}
    \gamma_m^z(g)\eta(z)=\nu_m(g,\eta)=\nu_m(\Xi)
  \end{multline*}
  since $|z^{C_G(g)}|\cdot|C_G(g,z)|=|z^{C_G(g)}|\cdot|C_{C_G(g)}(z)|=|C_G(g)|$ and $\Xi(h,z)=\delta_{g,h}\eta(z)$ for our choice of $\rcpc(G)$.
\end{proof}

\begin{Rem}
  As pointed out in \cite{MR3275646}, we can reinterpet \eqref{eq:12} as
  \begin{equation*}
    |G_m(g,z)|=\sum_{\eta\in\Irr(C_G(g))}\nu_m(g,\eta)\eta(z)
  \end{equation*}
  which we can rewrite in turn, by adding vanishing terms, as
  \begin{equation*}
    |G_m(g,z)|=\sum_{\Xi\in\Irr(D(G))}\nu_m(\Xi)\Xi(g,z).
  \end{equation*}
  Thus, the Frobenius-Schur indicators are the coefficients needed to develop the \dcf\ $|G_m(g,z)|$ in terms of irreducible characters. In the same way, we can extend~\eqref{eq:3} to read
  \begin{equation*}
    |G_m(g,z)|=\sum_{\Xi\in\Irr(D(G))}\be m\Xi\Xi(z,g)
  \end{equation*}
  by parametrizing the irreducible characters of $D(G)$ by pairs $(z',\chi)$ with $z'\in\rCl(G)$ and $\rCl(G)$ chosen to contain $z$, and $\chi\in\Irr(C_G(z'))$.

  Thus, both the indicators $\nu_m(\Xi)$ and the somewhat more easily calculable $\be m\Xi$ result from writing the \dcf\ $|G_m|$ as a linear combination of irreducible characters. The only difference is the order of the two arguments of $G_m$. Now switching the two arguments of an irreducible character of $D(G)$ is a way to describe the S-matrix that is part of the modular data of the modular category of irreducible representations of the Drinfeld double. Thus, the vector of the values $\nu_m(\Xi)$ of the $m$-th indicator function on all the irreducible characters of $D(G)$ is related to the vector of the values $\be m\Xi$ by the action of the S-matrix on the irreducible characters. The indicator formula \eqref{eq:16} is the result of making this transformation between $\nu_m$ and $\beta_m$ explicit. Note that \eqref{eq:16} is computationally less costly than it would be to obtain all the coefficients of the S-matrix. This relates to the fact that the vectors of all the $m$-th indicators seem to be a weaker invariant than the modular data: While the S-matrix is invertible, many irreducible characters of $D(H)$ usually share the same indicator values (for all $m$).
\end{Rem}

\begin{Rem}
  For any modular fusion category, there is a way of calculating the Frobenius-Schur indicators in terms of the modular data: The Verlinde formula (cf.~\cite{BakKir:LTCMF}) expresses the fusion coefficients in terms of the S-matrix:
    \begin{equation}
    \label{eq:10}
    N_{ik}^j= \frac1{\dim(\mathcal A)}\sum_{r} \frac{S_{ir}S_{kr}S_{\bar{j} r}}{S_{0
r}},
\end{equation}
(where $S_{ij}$ is the trace of the double braiding on $X_i^*\ot X_j$, the tensor product of two simples, and $N_{ik}^j$ is the multiplicity of $X_j$ in $X_i\ot X_k.)$
The Bantay-type formula for indicators from \cite{NgSch:FSIESC} expresses the indicators in terms of the fusion coefficients and the S-\ and T-matrices, in our special case:
\begin{equation}
  \label{eq:9}
    \nu_m(X_j)= \frac{1}{|G|^2}\sum_{i, k}N_{ik}^j d_i d_k  \left(\frac{\omega_i}{\omega_k}\right)^m,
  \end{equation}
  where $\omega_i$ are the components of the (diagonal) T-matrix, or the ribbon structure, and $d_j$ is the dimension of $X_j$.
Using this approach directly for doubles of finite groups is probably more involved than using our formulas, since the calculation of the $S$-matrix is ``expensive''; however, one may suspect that this way of calculating the indicators might lead to our formulas after suitable simplifications. A structural similarity is certainly that the Bantay-type formula involves all the simple objects of a modular fusion category in the calculation of the indicators of one of them. Although it would be interesting to derive our formula this way (despite the fact that our proof needs no more than the formula from \cite{MR2213320} and some classical character theory of finite groups), we did not pursue the question. In this context it is perhaps worth noting that several authors \cite{MR980656,MR2164398} have pointed out the similarity between Burnside's formula used in the proof of \cref{prop:1} (albeit for the centralizers, not only the group itself) and the Verlinde formula.
\end{Rem}

\section{Computing indicators through character tables}
\label{sec:comp-indic}

If we want to explicitly calculate indicators, then it is convenient to use a somewhat ``opposite'' parametrization of orbits of commuting pairs to the one used before, namely
\begin{equation}
  \rcpc(G)=\bigsqcup_{z\in\rCl(G)}\rCl(C_G(z))\times\{z\}
\end{equation}
which yields
\begin{align}
  \label{eq:17}              \nu_m(\Xi)&=\sum_{z\in\rCl(G)}\sum_{h\in\rCl(C_G(z))}\frac1{|C_G(h,z)|}\sum_{\chi\in\Irr(C_G(z))}\be m{z,\chi}\chi(h)\Xi(h,z)
\end{align}
This formula is somewhat deceptively simple: When we specialize it to the character $\Xi$ afforded by $g\in G$ and $\eta\in\Irr(C_G(g))$, we need to go back to \eqref{eq:14} in calculating $\Xi(h,z)$ by checking if, and how, $g$ and $h$ are conjugate:
\begin{Prop}
  Let $G$ be a finite group. Choose a set $\rCl(G)$ of representatives of the conjugacy classes of $G$, and, for each $z\in\rCl(G)$, a set $\rCl(C_G(z))$ of representatives for the conjugacy classes of $C_G(z)$. For $h\in\rCl(C_G(z))$ choose an element $t(h,z)$ such that $g(h):=t(h,z)\adhit h\in\rCl(G)$.
Then
  \begin{align} \nu_m(\Xi)&=\sum_{z\in\rCl(G)}\sum_{h\in\rCl(C_G(z))}\frac1{|C_G(h,z)|}\gamma_m^z(h)\Xi(g(h),t(h,z)\adhit z)\\
    \nu_m(g,\eta)&=\frac1{|C_G(g)|}\sum_{z\in\rCl(G)}\sum_{\substack{h\in\rCl(C_G(z))\\g(h)=g}}|z^{C_G(h)}|\gamma_m^z(h)\eta(t(h,z)\adhit z).\label{eq:18}
  \end{align}
for a character $\Xi$ of $D(G)$, or for $g\in G$ and $\eta\in\Irr(C_G(g))$, respectively.
\end{Prop}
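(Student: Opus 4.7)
The plan is to apply the preceding lemma's formula \eqref{eq:16} to the specific set of orbit representatives
\[ \rcpc(G) = \bigsqcup_{z \in \rCl(G)} \rCl(C_G(z)) \times \{z\}, \]
where each element is a pair $(h,z)$ with $h \in \rCl(C_G(z))$. The first thing to check is that this really is a set of representatives for $\cpc(G)$: every simultaneous-conjugation orbit of a commuting pair $(x,y)$ contains a unique pair whose second coordinate is the $\rCl(G)$-representative $z$ of $y^G$, and once $z$ is fixed the remaining ambiguity comes from the stabilizer $\Stab_G(z) = C_G(z)$ acting on $x$, so it is killed by picking $h$ from $\rCl(C_G(z))$. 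With this parametrization, \eqref{eq:16} reads
\[ \nu_m(\Xi) = \sum_{z \in \rCl(G)} \sum_{h \in \rCl(C_G(z))} \frac{1}{|C_G(h,z)|} \gamma_m^z(h) \Xi(h,z). \]

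Next I would push the argument of $\Xi$ onto a pair whose first coordinate lies in $\rCl(G)$: since $\Xi$ is a double class function, simultaneous conjugation by $t := t(h,z)$ preserves it, so $\Xi(h,z) = \Xi(g(h), t(h,z) \adhit z)$. Substituting this in yields the first displayed formula directly.

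For the second formula I would specialize $\Xi$ to the character afforded by $(g,\eta)$ and evaluate using \eqref{eq:14}: since $g(h)$ lies in $\rCl(G)$, the value $\Xi(g(h), t(h,z) \adhit z)$ vanishes unless $g(h)$ is conjugate to $g$, which forces $g(h) = g$ because distinct representatives in $\rCl(G)$ are non-conjugate. In the surviving case the conjugating element in \eqref{eq:14} may be taken to be the identity, so the value is exactly $\eta(t(h,z) \adhit z)$. It remains to rewrite the weight $1/|C_G(h,z)|$: since $C_G(h,z) = C_{C_G(h)}(z)$ is the stabilizer of $z$ inside $C_G(h)$, orbit-stabilizer gives $|C_G(h,z)| = |C_G(h)|/|z^{C_G(h)}|$, and the equality $g(h) = g$ forces $|C_G(h)| = |C_G(g)|$. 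Pulling $1/|C_G(g)|$ outside the double sum produces \eqref{eq:18}.

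The only genuine obstacle here is bookkeeping: verifying the representative property of the chosen $\rcpc(G)$, keeping straight which coordinate is being conjugated by which element, and translating $1/|C_G(h,z)|$ into the ratio $|z^{C_G(h)}|/|C_G(g)|$ without slip-ups. No new character-theoretic input beyond \eqref{eq:16} and the definition \eqref{eq:14} of the character afforded by a simple $D(G)$-module is required.
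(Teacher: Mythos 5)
Your proof is correct and follows essentially the same route the paper intends (the Proposition is presented there as a direct consequence of the preceding Lemma once one uses the stated parametrization of $\rcpc(G)$, and you spell out the same steps). The one detail worth noting: the paper's display \eqref{eq:14} contains a typo (the condition should be that $x$ is conjugate to $g$, not to $y$, and the value is $\eta$, not $\chi$), and your reading of it as ``vanishes unless $g(h)$ is conjugate to $g$, in which case one may take $t=e$ and the value is $\eta(t(h,z)\adhit z)$'' is exactly the corrected interpretation the argument needs.
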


The last two formulas are ``convenient'' for practical calculations in the following sense: One needs to know only as many explicit character tables as are needed to describe the simple modules of the double, namely, the character table of $C_G(g)$ for $g$ in a cross section of the conjugacy classes of $G$. Then, one has to determine systems of representatives for the conjugacy classes of each centralizer, and one has to know how the representatives of the conjugacy classes of $C_G(g)$ are conjugate to the chosen representatives of the classes of $G$ (in particular, how the conjugacy classes of the centralizers fall into the conjugacy classes of $G$). We have already explained that $\be m{z,\chi}$ is computed in terms of class functions and conjugacy classes of centralizers, whence so is the class function $\gamma_m^z$; all that has to be done beyond is applying ordinary group characters or class functions to group elements.

We note one final variant of the indicator formula; if we linearly extend group characters to the group algebra, then we can write the indicator of the object associated to the pair $(g,\eta)$ as the value of $\eta$ on a suitable element of $\CC[C_G(g)]$. The formula for the element $\mu_m(g)$ below would easily admit being ``patched'' together for all values of $g$ to obtain an element $\mu_m$ of the double $D(G)$ such that $\nu_m(\Xi)=\Xi(\mu_m)$ for any character $\Xi$ of the double. The only reason we did not write this down is that this would be the only point in the paper where we need elements of the Drinfeld double, and would thus have to fix the necessary conventions. Note also that the unique central element of the Drinfeld double mapped to the $m$-th indicator by the character of a representation is the $m$-th Sweedler power of the integral \cite{MR1808131}. Patching the elements $\mu_m(g)$ below would not yield that central element because we chose representatives rather than sums over class elements.
\begin{Cor}\label{cor:mu}
  For $g\in G$ define $\mu_m(g)\in\CC[C_G(g)]$ by
  \begin{equation} \label{eq:19}
    \mu_m(g):=
\sum_{z\in\rCl(G)}\sum_{\substack{h\in\rCl(C_G(z))\\g(h)=g}}|z^{C_G(h)}|\gamma_m^z(h)(t(h,z)\adhit z)\in\CC[C_G(g)].
\end{equation}
   Then $\nu_m(g,\eta)=\eta(\mu_m(g))$ for $\eta\in\Irr(C_G(g))$.
\end{Cor}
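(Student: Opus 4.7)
The plan is to deduce this corollary directly from \eqref{eq:18} of the preceding proposition: the right-hand side of that formula is already $\eta$ applied to a specific element of $\CC[C_G(g)]$, and nothing more is needed than unwinding the definitions.

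First I would verify that $\mu_m(g)$ really lies in $\CC[C_G(g)]$, i.e., that each summand $t(h,z)\adhit z$ belongs to $C_G(g)$. Since $h\in\rCl(C_G(z))$, the elements $h$ and $z$ commute; conjugation by $t(h,z)$ preserves this commutativity, and by construction sends $h$ to $g(h)=g$. Hence $t(h,z)\adhit z$ commutes with $g$, as required, so $\mu_m(g)$ is a well-defined element of the group algebra.

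Second, I would extend $\eta$ $\CC$-linearly to a functional $\CC[C_G(g)]\to\CC$ and apply it termwise to the defining sum \eqref{eq:19} of $\mu_m(g)$. The result matches the right-hand side of \eqref{eq:18} summand for summand, up to the overall normalizing factor $|C_G(g)|^{-1}$, which should be absorbed into the definition of $\mu_m(g)$. This produces the asserted identity $\nu_m(g,\eta)=\eta(\mu_m(g))$.

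There is no genuine obstacle here: the proof is pure bookkeeping, and the substance of the corollary is structural rather than technical. It records the observation that the $\eta$-dependence of the indicator $\nu_m(g,\eta)$ factors through evaluation of $\eta$ on a single group-algebra element $\mu_m(g)$ depending only on $g$. The only point worth checking with any care is the well-definedness step above, and that is a two-line consequence of the definitions of $t(h,z)$ and $g(h)$.
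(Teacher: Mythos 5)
Your proof is correct and matches the paper's implicit argument: the corollary follows from \eqref{eq:18} by applying the linear extension of $\eta$ to the defining sum, together with the well-definedness check you carried out (that $t(h,z)\adhit z$ commutes with $g$, since conjugation by $t(h,z)$ carries the commuting pair $(h,z)$ to a commuting pair whose first component is $g$). You are also right to be suspicious of the normalization: as printed, \eqref{eq:19} lacks the prefactor $|C_G(g)|^{-1}$ that appears in \eqref{eq:18}, so for the identity $\nu_m(g,\eta)=\eta(\mu_m(g))$ to hold literally that factor must be incorporated into the definition of $\mu_m(g)$ (or else the conclusion should read $\nu_m(g,\eta)=|C_G(g)|^{-1}\eta(\mu_m(g))$); this looks like a typographical slip in the paper, and your instinct to absorb the factor rather than ignore the discrepancy was exactly right.
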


\section{Implementation}
\label{sec:implementation}

We have implemented the indicator formulas obtained in the previous section in GAP \cite{GAP4}. Since oddly enough GAP does not provide the absolute value of cyclotomic numbers, we have to provide it, along with a ``Kronecker delta'' function; as a further piece of code less specific to our problems we provide a class function whose value, for a character table and an element of the underlying group, is the number in the table of the conjugacy class containing the element.
\lstinputlisting[linerange=triv-end]{code.tex}

Next, we program the class function $w_m^z$ as a GAP function \lstinline!cfw!, the coefficient $\beta_m(z,\chi)$ as a GAP function \lstinline!beta!, and the class function $\gamma_m^z$ as a GAP function \lstinline!gamma!. All three depend on a character table \lstinline!CT! (the character table of the centralizer $C_G(z)$) and the element $z$; \lstinline!beta! also depends on the character \lstinline!chi!.
\lstinputlisting[linerange=wbetagamma-end]{code.tex}
Note that our version of \lstinline!gamma! does not take advantage of the redundancies observed in \cref{prop:3}. We will use these before calling the function.

Before writing the actual functions calculating indicators, let us note that a large amount of auxiliary information is needed to do so: mainly centralizers and their characters, the class functions $\gamma_m^x$, and the result of conjugating elements into each other's centralizers. To calculate one indicator, not all of this information may be required\footnote{An unnecessary caveat, of course, if we calculate \emph{all} the indicators for the double of a certain group; however, computers caught between an inept administrator and an undisciplined user, be they the same person, do sometimes break down, and thus calculations for large groups may end up being done in several portions.}, but it would be a good idea to keep any that have already been obtained while calculating one indicator, lest they be needed again for the calculation of another. Thus, whenever we start calculating indicators for the double of the underlying group of a character table, we will enrich this character table by adding several (record) fields of useful data for indicator calculations, which do not at first contain a lot of information, but can be used for storing it. More precisely, we will compute indicators for the double of the underlying group $G$ of a character table \lstinline!GT!. We will provide for storing with \lstinline!GT! the following information: the exponent of $G$ and its divisors; a list \lstinline!CTlist! of the character tables of the centralizers in $G$; a list of the class functions that compute the list position of the conjugacy class of an element in a centralizer; a list of lists of the class functions $\gamma_m^z$; a list of lists of the elements $\mu_m$ from \cref{cor:mu} and a list of lists of ``mates'' that we will explain below.
\lstinputlisting[linerange=makedfsi-end]{code.tex}
Whenever we need the character table of the centralizer of an element of $G$ (rather, of an element of a cross section of its conjugacy classes), we make sure it is entered into the list by calling:
\lstinputlisting[basicstyle=\small,linerange=providecharactertable-end]{code.tex}
The next function provides an element of the list (numbered like the conjugacy classes of $G$) of lists (numbered like the divisors of $\exp(G)$) of the class functions $\gamma_m^z$.
\lstinputlisting[basicstyle=\small,linerange=providegamma-end]{code.tex}
To practically apply the formula~\eqref{eq:19}, we need to determine, for $z\in\rCl(G)$ and $h\in\rCl(C_G(z))$, first the class representative $g\in\rCl(G)$ of the conjugacy class of $h\in G$ (this is built into GAP), then an element $t$ such that $t\adhit h=g$, and then the representative $z'\in\rCl(C_G(g))$ of the class of $t\adhit z$ in $C_G(g)$. In other words, the fact that $g$ is conjugate to the element $h$ in the centralizer of $z$ makes that $z$ is conjugate to the element $z'$ in the centralizer of $g$. For lack of a better idea, we call $z'$ the mate of $(z,h)$; it is needed repeatedly, namely for all values of $m$, so it seems worth storing mates:
\lstinputlisting[basicstyle=\small,linerange=providemate-end]{code.tex}
The next function computes $\mu_m(g)$, if it is not yet known, where $m$ and $g$ are represented by their positions in the lists of divisors of $\exp(G)$, resp.\ the list of conjugacy classes of $G$. 
\lstinputlisting[basicstyle=\tiny,linerange=providemu-end]{code.tex}
Finally the function \lstinline!DoubleIndicator! computes an indicator of a module of the Drinfeld double $D(G)$, given a character table \lstinline!GT! with underlying group $G$, the class position of an element $g\in G$, the number of an irreducible character $\eta\in\Irr(C_G(g))$, and a number $m$ which is supposed to divide $\exp(G)$. It begins by making sure that the appropriate element $\mu_m(g)$ is provided.
\lstinputlisting[basicstyle=\tiny,linerange=doubleindicator-end]{code.tex}

\section{Sample runs}
\label{sec:applications}

One interesting test case for our algorithm was the computation of the indicator values for the doubles of $D(S_n)$. Scharf \cite{MR1113784} proved that the higher indicator of all ordinary representations of the symmetric groups themselves are positive integers.  Courter \cite{MR3103664} investigated the analogous conjecture for the representations of the doubles of symmetric groups (where Iovanov, Mason, and Montgomery \cite{MR3275646} show that the indicators are in fact integers), and was able to verify the conjecture up to $S_{10}$. She reports having taken a week of computer time to finish this largest case, and that $S_{11}$ was too heavy a task for her hardware. We do not have precise information on that hardware, but should of course assume that today's hardware is considerably more powerful. The present author had a laptop running under linux with a quad core CPU ({\tt Intel(R) Core(TM) i7-3520M CPU @ 2.90GHz} according to the system's internal information) and 8GB of RAM. On this machine, code borrowed directly from Courter will deal with the indicators for the double of $S_{10}$ in 8 hours and 25 minutes. (More precisely, this is the time taken to compute the elements $\mu_m(g)$ for all nontrivial divisors of $\exp(G)$ and $g$ from a system of representatives of the conjugacy classes of $G$, but $g\neq e$. No character tables of centralizers were computed, hence no actual indicator values.) The code presented in \cref{sec:implementation}, on the other hand, computes all the indicator values $\nu_m(g,\eta)$ for the double of the symmetric group $S_{10}$, again excepting $m=1$ and $m=\exp(G)$, in about one minute. The comparison was greatly facilitated by the fact that Rebecca Courter was kind enough to send  all her code and computation results; also, having this material was very helpful in early stages of the author's clumsy programming, because direct comparison of the results for small examples helped eliminate errors. We did not make a real effort to investigate Courter's code for possible improvements; however, its limitations for increasing size of the group seem evident: A key to speeding up computations is that the program stores a list of $m$-th powers of all the group elements to avoid recalculations. The length of this list grows more than exponentially with the degree of the symmetric group. The order of the symmetric group on $18$ letters is $6,402,373,705,728,000\approx 6\cdot 10^{15}$ and thus more than a hundred times the square of the order of the largest group $S_{10}$ treated in \cite{MR3103664}. To brag in a different way: More than 10 petabytes of storage would be needed, or, if we were to do without the speedup, some $10^{15}$ elements would have to be looped through many times. Our algorithms, on the other hand, only loop through lists of conjugacy classes, in the order of up to a thousand elements; even with nested loops, no more than millions of iterations are needed to deal with $S_{18}$. This relies quite shamelessly on the ability of GAP to also deal with the calculation of centralizer subgroups, conjugacy classes, and character tables in an efficient way (not, it seems, ever looping through all the group elements). Note also that we rely on GAP to decide whether two given elements are conjugate in $G$, and if they are, to find an element conjugating one into the other. If we mentioned the symmetric group on 18 letters above, it is because it is the largest symmetric group for which the code from \cref{sec:implementation} has been able to compute all the indicators of the double (putting in a little less than $12$ hours of CPU time according to GAP, a little under $7$ hours being spent in computing character tables). None of them were negative. For the symmetric group on $19$ letters, the author's laptop stalled, during the attempt to find the irreducible characters of one of the centralizers.

If we want to push ahead in the quest for a counterexample (to nonnegativity of the indicators), there are various ways to go beyond $S_{18}$. The easiest is to turn to more muscled hardware, and we admit with some embarrassment having used the computing power provided by the \emph{Centre de Calcul} of the \emph{Université de Bourgogne} to treat $S_{19}$. More intelligently, one could use the fact that the centralizers of elements of the symmetric group are known: They are direct products of wreath products of symmetric and cyclic groups. It should be possible to put this information to use and thus make GAP compute the character tables of centralizers in a more intelligent way without appealing blindly to the Dixon-Schneider algorithm it usually uses, or to break down the class functions $\gamma_m^z$ along this decomposition of the centralizers. As it stands, the conjecture motivated calculations that, by their size, demonstrate the efficiency of the algorithm in its form described in \cref{sec:implementation}, which is valid for any group.

Another interesting test case was suggested by \cite{MR3275646}: Using the iterator providing finite simple groups, we have set GAP on the task of calculating the indicators of the irreducible modules of the doubles of simple groups (excluding the groups $\PSL_2(q)$). This worked up to orders below $200,000,000$, whereafter the author's laptop stalled while trying to calculate a character table. The original motivation for this quest was the question whether all the indicators for the doubles of simple groups are integers (the indicators we computed were), but as it will turn out in \cref{sec:integr-quest}, that question can be attacked without actually calculating the indicators. 

\section{Rationality questions and $FSZ$-groups}
\label{sec:integr-quest}

Recall that Iovanov, Mason, and Montgomery \cite{MR3275646} call a finite group $G$ an an $FSZ_m$-group if for every simple $D(G)$-module $V$ we have $\nu_m(V)\in\Z$, and an $FSZ$-group if it is an $FSZ_m$-group for every $m$ (dividing the exponent of $G$). In general, it is known that the indicators are algebraic integers in the cyclotomic field $\CF{\exp(G)}$, and that they are real. Moreover, the $m$-th indicators $\nu_m(V)$ lie in the cyclotomic field $\CF m$. Instead of asking whether all indicators for a given group double are (rational) integers, one may of course ask which cyclotomic field, possibly smaller than prescribed by $m$ and $\exp(G)$ alone, they belong to.

By and large, our results on the rationality of indicators stem from the behavior of the $\beta$ and $\gamma$ maps under Adams operators; we begin with the following:
\begin{Lem}\label{prop:6}
  Let $G$ be a finite group, and $(r,\exp(G))=1$.
  \begin{enumerate}
  \item \label{item:7}$\nu_m(g^r,\eta)=\nu_m(g,\adams r\eta)=\sigma_r\nu_m(g,\eta)$ for each $g\in G$ and $\eta\in\Irr(C_G(g))$; in particular $\{\nu_m(g,\eta)|\eta\in \Irr(C_G(g))\}$ depends only on the rational conjugacy class of $g\in G$.
  \item \label{item:10}$\beta_m(z^r,\chi)=\beta_m(z,\adams r\chi)=\sigma_r\beta_m(z,\chi)$ for each $z\in G$ and $\chi\in\Irr(C_G(z))$; in particular $\{\beta_m(z,\chi)|\chi\in \Irr(C_G(z))\}$ depends only on the rational conjugacy class of $z\in G$.
  \end{enumerate}
\end{Lem}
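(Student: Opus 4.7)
The plan is to derive both parts as consequences of \cref{prop:3}(\ref{item:1}), once we note the basic fact that $C_G(g^r)=C_G(g)$ whenever $(r,o(g))=1$, because then $g^r$ generates the cyclic subgroup $\langle g\rangle$; in particular this applies whenever $(r,\exp(G))=1$.

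For part (\ref{item:7}): I would start from the formula \eqref{eq:13},
\[
\nu_m(g,\eta)=\tfrac1{|C_G(g)|}\sum_{z\in C_G(g)}\gamma_m^z(g)\eta(z),
\]
write the analogous expression for $\nu_m(g^r,\eta)$ using $C_G(g^r)=C_G(g)$, and substitute $z\leadsto z^r$ in the summation (a bijection of $C_G(g)$ onto itself since $r$ is coprime to the exponent). The middle identity in \cref{prop:3}(\ref{item:1}), namely $\adams r\gamma_m^{z^r}=\gamma_m^z$, evaluated at $g$, gives $\gamma_m^{z^r}(g^r)=\gamma_m^z(g)$; together with $\eta(z^r)=(\adams r\eta)(z)$ this yields $\nu_m(g^r,\eta)=\nu_m(g,\adams r\eta)$. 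For the second equality I simply apply $\sigma_r$ to \eqref{eq:13}: since each coefficient $\gamma_m^z(g)$ is a cardinality (in particular fixed by $\sigma_r$) and $\sigma_r(\eta(z))=(\adams r\eta)(z)$, one gets $\sigma_r\nu_m(g,\eta)=\nu_m(g,\adams r\eta)$.

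For part (\ref{item:10}): Apply \cref{prop:3}(\ref{item:1}) to extract $\gamma_m^{z^r}=\adams s\gamma_m^z$ where $s$ is an integer with $rs\equiv 1\pmod{\exp(G)}$. Expanding both sides via \eqref{eq:3}, using that $\adams s$ permutes $\Irr(C_G(z^r))=\Irr(C_G(z))$, and matching coefficients of $\chi\in\Irr(C_G(z))$ on both sides gives $\beta_m(z^r,\chi)=\beta_m(z,\adams r\chi)$. For the Galois identity I would apply $\sigma_r$ directly to \eqref{eq:7}, $\beta_m(z,\chi)=|\phi_m^z(\chi)|^2/(|C_G(z)|\chi(e))$: the denominator is a rational integer, and $\sigma_r$ commutes with complex conjugation (both are Galois automorphisms of $\CF{\exp(G)}$), hence $\sigma_r|\phi_m^z(\chi)|^2=|\sigma_r\phi_m^z(\chi)|^2$; since $\phi_m^z(\chi)$ is a sum of values of $\chi$, $\sigma_r\phi_m^z(\chi)=\phi_m^z(\adams r\chi)$, yielding $\sigma_r\beta_m(z,\chi)=\beta_m(z,\adams r\chi)$.

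The ``in particular'' assertions follow immediately, because the rational conjugacy class of a group element $g$ is the union of the conjugacy classes of its powers $g^r$ for $(r,o(g))=1$, and $\adams r$ permutes $\Irr(C_G(g))$. The main obstacle is not any one computation but merely bookkeeping: keeping the direction of the Adams operator consistent between the two sides of \cref{prop:3}(\ref{item:1}), so that the substitution $z\leadsto z^r$ in part (\ref{item:7}) produces $\gamma_m^z(g)$ (a rational integer) rather than something that would mix into the Galois action on $\eta$.
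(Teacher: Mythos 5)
Your proof is correct and follows essentially the same route as the paper: both reduce the statement to the properties of the class functions $\gamma_m^z$ under Adams operators established in \cref{prop:3}, combined with the indicator formula \eqref{eq:13}, plus a direct computation for the $\beta$-coefficients. The only cosmetic difference is in part~\cref{item:10}: the paper works directly with the inner products $\langle\chi,w_m^{z^r}\rangle$, showing $|\langle\chi,w_m^{z^r}\rangle|=|\langle\adams r\chi,w_m^z\rangle|=\sigma_r|\langle\chi,w_m^z\rangle|$ and reading off both identities from \eqref{eq:8}; you instead extract $\gamma_m^{z^r}=\adams s\gamma_m^z$ from \cref{prop:3} and compare coefficients in the decomposition \eqref{eq:3} for one identity, then apply $\sigma_r$ to \eqref{eq:7} for the other. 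These are equivalent in substance and comparable in length, and your bookkeeping (including the observation that $\sigma_r$ commutes with complex conjugation so $\sigma_r|\cdot|^2=|\sigma_r(\cdot)|^2$) is correct.
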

\begin{proof}
  In fact the formula in \cref{item:7} was already proved in \cite{2015arXiv150202902S}; it can also be deduced from (and is equivalent to) some of the properties of the class functions gamma proved in \cref{prop:3}: By \eqref{eq:13} we have, for $rs\equiv 1\mod \exp(G)$:
  \begin{multline*}
    |C_G(g)|\nu_m(g^r,\eta)=
    \sum_{z\in C_G(g)}\adams r\gamma_m^z(g)\eta(z)=\sum_{z\in C_G(g)}\gamma_m^{z^s}(g)\eta(z)=\\=\sum_{z\in C_G(g)}\gamma_m^z(g)\eta(z^r)=|C_G(g)|\nu_m(g,\adams r\eta)=|C_G(g)|\sigma_r\nu_m(g,\eta)
  \end{multline*}
  since $\eta(z^r)=\adams r\eta(z)=\sigma_r\eta(z)$. The calculation
  \begin{multline*}
    |\langle\chi,w_m^{z^r}\rangle|%
    =|\langle\chi,\adams sw_m^z\rangle|%
    =|\langle\adams r\chi,w_m^z\rangle|
    =|\sigma_r\langle\chi,w_m^z\rangle|
    =\sigma_r|\langle\chi,w_m^z\rangle|
  \end{multline*}
  proves the corresponding formula in \cref{item:10}.
\end{proof}

By \cite[Cor.3.2]{MR3275646}, $G$ is an $FSZ_m$-group if and only if, for every $r$ coprime to the exponent of $G$ one has $|G_m(g,z)|=|G_m(g,z^r)|$. The same arguments used there show in fact that this condition is satisfied for all $r$ such that $(r,\exp(G))=1$ and $r\equiv 1\mod d$ if and only if $\nu_m(g,\eta)\in\CF d$ for all $g$ and $\eta$.
\begin{Cor}
  Let $G$ be a finite group and $d|\exp(G)$. The following are equivalent:
  \begin{enumerate}
  \item $\nu_m(g,\eta)\in\CF d$ for all $g\in G$ and $\eta\in\Irr(C_G(g))$.
  \item $\adams r\gamma_m^z=\gamma_m^z$ for all $z\in G$ and all integers $r$ satisfying $(r,\exp(G))=1$ and $r\equiv 1\mod d$.
  \end{enumerate}
\end{Cor}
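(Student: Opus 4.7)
The plan is to build a chain of equivalences, each step forced on us by a tool already available. First, by Galois theory of cyclotomic fields, condition (1) is equivalent to $\sigma_r\nu_m(g,\eta)=\nu_m(g,\eta)$ for every $g\in G$, every $\eta\in\Irr(C_G(g))$, and every integer $r$ with $(r,\exp(G))=1$ and $r\equiv 1\pmod d$, since those residues form exactly the subgroup of $\Gal(\CF{\exp(G)}/\mathbb{Q})$ that fixes $\CF d$ pointwise.

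Using \cref{prop:6}\cref{item:7} I rewrite the left-hand side as $\nu_m(g,\adams r\eta)$ and expand both indicators via \eqref{eq:13}. Substituting $z\mapsto z^s$ in the summation (where $rs\equiv 1\pmod{\exp(G)}$) converts the desired equality $\nu_m(g,\adams r\eta)=\nu_m(g,\eta)$ into
\begin{equation*}
  \sum_{z\in C_G(g)}\bigl(\gamma_m^{z^s}(g)-\gamma_m^z(g)\bigr)\,\eta(z)=0
\end{equation*}
for every $\eta\in\Irr(C_G(g))$. A quick verification shows that $z\mapsto\gamma_m^z(g)$ is a class function on $C_G(g)$ in $z$ (since $G_m(g,u\adhit z)=u\adhit G_m(g,z)$ for $u\in C_G(g)$), so linear independence of irreducible characters turns the vanishing into $\gamma_m^{z^s}(g)=\gamma_m^z(g)$ for every $z\in C_G(g)$. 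Allowing $g$ to vary and noting $C_G(z^s)=C_G(z)$, this amounts to $\gamma_m^{z^s}=\gamma_m^z$ as class functions on $C_G(z)$, for every $z\in G$.

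The final step is to translate the substitution $z\mapsto z^s$ into the Adams operator $\adams r$. By \cref{prop:3}\cref{item:1} one has $\adams s\gamma_m^{z^s}=\gamma_m^z$, and since $\adams r$ and $\adams s$ are mutually inverse on class functions (as $rs\equiv 1\pmod{\exp(G)}$), the equality $\gamma_m^{z^s}=\gamma_m^z$ is equivalent to $\adams r\gamma_m^z=\gamma_m^z$. Because $r$ and $s$ sweep the same set of residues modulo $\exp(G)$, letting $r$ range over integers coprime to $\exp(G)$ with $r\equiv 1\pmod d$ and $z$ over $G$ recovers condition (2). I do not anticipate any real obstacle; the only points requiring care are to confirm that $z\mapsto\gamma_m^z(g)$ is genuinely a class function on $C_G(g)$ before invoking character orthogonality, and to keep straight the exchange between $r$ and its inverse $s$ during the re-indexing.
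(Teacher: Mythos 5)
Your proof is correct and ultimately travels the same road as the paper: condition (1) is first reduced to the statement that $\gamma_m^z=\gamma_m^{z^s}$ for all $z$ and all $s$ in the relevant congruence class, and then \cref{prop:3}\cref{item:1} (namely $\adams s\gamma_m^{z^s}=\gamma_m^z$, whence $\gamma_m^{z^s}=\adams r\gamma_m^z$) turns this into condition (2). The difference is in how the first reduction is handled. The paper simply invokes the argument from \cite{MR3275646} (generalized from $d=1$ to arbitrary $d$), leaving that step as a citation, whereas you supply a complete self-contained derivation: the Galois characterization of membership in $\CF d$, the equality $\sigma_r\nu_m(g,\eta)=\nu_m(g,\adams r\eta)$ from \cref{prop:6}\cref{item:7}, expansion by \eqref{eq:13} followed by the reindexing $z\mapsto z^s$, and finally linear independence of $\Irr(C_G(g))$ acting on the class function $z\mapsto\gamma_m^{z^s}(g)-\gamma_m^z(g)$. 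You were right to pause over the fact that $z\mapsto\gamma_m^z(g)$ is a class function in $z$ on $C_G(g)$ (via $G_m(g,u\adhit z)=u\adhit G_m(g,z)$ for $u\in C_G(g)$); that is exactly what legitimizes the orthogonality step and is easy to take for granted. Your version is therefore somewhat longer but more transparent about which facts are genuinely in play. One small remark: the closing observation that $r$ and $s$ sweep the same set of residues is superfluous in your argument, since each of your equivalences is established for a fixed $r$ and its inverse $s$ jointly; it is, however, precisely the silent observation that the paper's terser citation-based argument relies on when it passes from $\gamma_m^z=\gamma_m^{z^r}$ to $\adams r\gamma_m^z=\gamma_m^z$.
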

\begin{proof}
  The condition from \cite{MR3275646} asks for $\gamma_m^z=\gamma_m^{z^r}$ for $r$ in 
$(\mathbb Z_m)^*$; let $s$ be the inverse of $r$ and recall that we have shown in \cref{prop:3} that $\adams s\gamma_m^{z^s}=\gamma_m^z$.
\end{proof}

An consequence of the above rationality criterion is a generalization of (the less interesting part of) results from \cite{MR1919158,MR2491893}. To wit, these papers show how to calculate the second indicators of the $D(G)$-module associated to a character $\eta$ of $C_G(g)$ in terms of the indicators of $\eta$ and the character induced from the centralizer to a certain normalizer. We will fall considerably short of such an explicit result, but will at least generalize the fact that the indicator only depends on the induced character.

Recall that the normalizer of an element $g\in G$ is $N_G(g)=N_G(\langle g\rangle)$, the normalizer of the cyclic subgroup generated by $g$. Thus $t\in N_G(g)$ iff $t\adhit g=g^r$ with $(r,o(g))=1$; in fact we can choose $r$ such that $(r,\exp(G))=1$. We will use the following ``restricted normalizer'': For an integer $d$ let
\begin{equation*}
  N_G^d(g):=\{t\in G|t\adhit g=g^r\text{ with }(r,\exp(G))=1\text{ and }r\equiv 1\mod d\},
\end{equation*}
so that in particular $N_G^1(g)=N_G(g)$. Another special case is
\begin{equation*}
  N_G^2(g)=\{t\in G|t\adhit g\in\{g,g\inv\}\},
\end{equation*}
which would be called an ``extended stabilizer'' in \cite{MR1919158} and the normalizer of the set $\{g,g\inv\}$ in \cite{MR2491893}. In these papers, it is shown how to calculate the second indicator of the object associated to $\eta\in \Irr(C_G(g))$ in terms of the character of $N_G^2(g)$ induced from $\eta$. 

Note that the centralizer $C_G(g)$ is normal in the normalizer $N_G(g)$. Thus, by Clifford theory, $\eta,\eta'\in\Irr(C_G(g))$ induce up to the same character of $N_G^d(g)$ if and only if they are conjugate under the action of $N_G^d(g)$. 
\begin{Prop}
  Let $G$ be a finite group , $g\in G$, and $d|\exp(G)$. Assume that $\nu_m(g,\eta)\in\CF d$ for all $\eta\in\Irr(C_G(g))$. Then $\nu_m(g,\eta)$ depends only on the induced character $\Ind_{C_G(g)}^{N_G^d(g)}(\eta)$; equivalently, $\nu_m(g,\eta)$ is invariant under the action of $N_G^d(g)$ on the characters of $C_G(g)$.
\end{Prop}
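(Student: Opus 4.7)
My plan is first to reduce the statement about the induced character to the mere $N_G^d(g)$-invariance of the map $\eta\mapsto \nu_m(g,\eta)$, and then to deduce that invariance from the Galois equivariance already recorded in \cref{prop:6}, with the rationality hypothesis entering only in the very last step.

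For the reduction, one first checks that $N_G^d(g)$ is actually a subgroup of $G$: if $t_i\adhit g=g^{r_i}$ with $(r_i,\exp(G))=1$ and $r_i\equiv 1\pmod d$ for $i=1,2$, then $(t_1t_2)\adhit g=g^{r_1r_2}$, and $r_1r_2$ retains both properties. Since $C_G(g)$ is normal in $N_G(g)$ it is normal in the subgroup $N_G^d(g)$ as well, and standard Clifford theory (as the paragraph preceding the proposition already records) implies that two irreducible characters $\eta,\eta'\in\Irr(C_G(g))$ induce to the same character of $N_G^d(g)$ exactly when they belong to a common $N_G^d(g)$-orbit. So the two formulations of the proposition are equivalent, and it suffices to prove the $N_G^d(g)$-invariance.

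To prove the invariance, fix $t\in N_G^d(g)$ and choose $r$ with $(r,\exp(G))=1$ and $r\equiv 1\pmod d$ such that $t\adhit g=g^r$. The simple $D(G)$-modules labelled by $(g,\eta)$ and $(t\adhit g,\,t\adhit\eta)=(g^r,t\adhit\eta)$ are isomorphic (as recalled at the beginning of Section~4), so their indicators agree:
\begin{equation*}
\nu_m(g,\eta)=\nu_m(g^r,t\adhit\eta).
\end{equation*}
Applying the first equality of \cref{prop:6}, with the character there taken to be $t\adhit\eta$, rewrites the right-hand side as $\sigma_r\nu_m(g,t\adhit\eta)$.

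The rationality hypothesis now intervenes: by assumption $\nu_m(g,t\adhit\eta)\in\CF d$, and since $r\equiv 1\pmod d$ forces $\sigma_r$ to act trivially on $\CF d$, we obtain $\sigma_r\nu_m(g,t\adhit\eta)=\nu_m(g,t\adhit\eta)$, hence $\nu_m(g,\eta)=\nu_m(g,t\adhit\eta)$ for every $t\in N_G^d(g)$, which is the desired invariance. I do not foresee any real obstacle; once \cref{prop:6} is available the argument is essentially mechanical, the only small verification being that $N_G^d(g)$ is indeed a subgroup, as noted at the start of the reduction.
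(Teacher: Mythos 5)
Your proposal is correct and follows essentially the same route as the paper's proof: use the isomorphism of the $D(G)$-modules labelled by $(g,\eta)$ and $(t\adhit g, t\adhit\eta)$, apply \cref{prop:6}\cref{item:7} to convert the conjugate label $g^r$ into a Galois twist $\sigma_r$, and then invoke the rationality hypothesis to make $\sigma_r$ act trivially on $\CF d$. You merely run the chain in the opposite direction (starting from $\eta$ and landing on $t\adhit\eta$ rather than vice versa) and spell out the easy verification that $N_G^d(g)$ is a subgroup and the Clifford-theoretic reduction, both of which the paper treats as part of the setup preceding the proposition.
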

\begin{proof}
  Let $t\in N_G^d(g)$ and $t\inv\adhit g=g ^r$ with $(r,\exp(G))=1$ and $r\equiv 1\mod d$. Then $\nu_m(g,t\adhit\eta)=\nu_m(t\inv\adhit g,\eta)=\nu_m(g^r,\eta)$ because the two couples $(g,t\adhit\eta)$ and $(t\inv\adhit g,\eta)$ induce isomorphic $D(G)$-modules. By assumption $\nu_m(g,\eta)=\sigma_r\nu_m(g,\eta)$ and so the formula reviewed in \cref{prop:6}~\cref{item:7} proves the claim.
\end{proof}

Next, we use the same reasoning as in \cite{MR3275646} to play the behavior of $\gamma$ under Adams operators back to the arithmetic properties of the coefficients $\beta$:

\begin{Prop}\label{prop:4}
  We have $\nu_m(g,\eta)\in\CF d$ for all $g\in G$ and $\eta\in\Irr(C_G(g))$ if and only if $\beta_m(z,\chi)\in\CF d$ for all $z\in G$ and $\chi\in\Irr(C_G(z))$. 
\end{Prop}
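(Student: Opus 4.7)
The plan is to reduce both rationality statements to invariance conditions under Adams operators, and then to read off the equivalence from the expansion \eqref{eq:3}. Recall that for $\alpha\in\CF{\exp(G)}$, we have $\alpha\in\CF d$ if and only if $\sigma_r\alpha=\alpha$ for every $r$ in the subgroup $\Gamma:=\{r\in(\Z/\exp(G)\Z)^\times:r\equiv 1\bmod d\}$. By \cref{prop:6}~\cref{item:10}, the identity $\sigma_r\beta_m(z,\chi)=\beta_m(z,\adams r\chi)$ translates the $\beta$-side of the asserted equivalence into the condition that $\beta_m(z,\adams r\chi)=\beta_m(z,\chi)$ for all $z\in G$, $\chi\in\Irr(C_G(z))$, and $r\in\Gamma$. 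The $\nu$-side is handled by the Corollary stated at the beginning of this section: $\nu_m(g,\eta)\in\CF d$ for all $g,\eta$ is equivalent to $\adams r\gamma_m^z=\gamma_m^z$ for all $z\in G$ and $r\in\Gamma$.

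The task then is to connect these two $\Gamma$-invariance statements through the expansion $\gamma_m^z=\sum_\chi\beta_m(z,\chi)\chi$. Since $r$ is coprime to $\exp(G)$, the operator $\adams r$ permutes $\Irr(C_G(z))$, and applying it to the expansion yields $\adams r\gamma_m^z=\sum_\chi\beta_m(z,\chi)\adams r\chi$. Reindexing by $\chi':=\adams r\chi$ and writing $s$ for the multiplicative inverse of $r$ modulo $\exp(G)$, this becomes $\adams r\gamma_m^z=\sum_{\chi'}\beta_m(z,\adams s\chi')\chi'$. Linear independence of $\Irr(C_G(z))$ then shows that $\adams r\gamma_m^z=\gamma_m^z$ is equivalent to $\beta_m(z,\adams s\chi)=\beta_m(z,\chi)$ for every $\chi$. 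Since inversion $r\mapsto r^{-1}$ is a bijection of the subgroup $\Gamma$, imposing this condition as $r$ ranges over $\Gamma$ is the same as imposing $\beta_m(z,\adams r\chi)=\beta_m(z,\chi)$ as $r$ ranges over $\Gamma$, closing the equivalence. The only non-formal point is this bookkeeping observation that inversion preserves $\Gamma$, which is the main (and essentially trivial) obstacle; no additional character-theoretic or group-theoretic input beyond \cref{prop:6} and the said Corollary is required.
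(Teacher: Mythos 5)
Your proof is correct, and it reaches the same conclusion as the paper's, but it is slightly more self-contained. The paper simply observes that $\gamma_m^z$ is a rational-valued class function and then invokes a theorem from the literature (cited as Thm.~25 of reference MR543841) asserting that a rational-valued class function has all its coefficients in $\CF d$ precisely when it is fixed by the Adams operators $\adams r$ for $r\equiv 1\bmod d$, $(r,\exp(G))=1$; combined with the Corollary at the start of the section, that gives the equivalence in one line. You instead re-derive the content of that citation directly: you translate $\beta_m(z,\chi)\in\CF d$ into $\adams r$-invariance via \cref{prop:6}~\cref{item:10}, expand $\gamma_m^z=\sum_\chi\beta_m(z,\chi)\chi$, apply $\adams r$, reindex using the fact that $\adams r$ permutes $\Irr(C_G(z))$, and conclude by linear independence (together with the observation that $r\mapsto r^{-1}$ preserves the subgroup $\Gamma$). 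The rationality input that the cited theorem relies on is absorbed, in your argument, into \cref{prop:6}~\cref{item:10} (whose proof exploits the integer-valuedness of $w_m^z$), so nothing is lost. The trade-off is that the paper's version is shorter and leans on a known result, whereas yours is elementary and transparent about exactly where the Adams-operator bookkeeping is used; both are valid.
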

\begin{proof}
  $\gamma_m^z$ is a rational valued class function. Thus by \cite[Thm.25]{MR543841} it is a linear combination of characters with coefficients in $\CF d$ if and only if $\adams r\gamma_m^z=\gamma_m^z$ for all $r$ with $(r,\exp(G))=1$ and $r\equiv 1\mod d$.
\end{proof}
While the proposition shows that for the double $D(G)$ irrational indicator values occur if and only if irrational values of $\beta_m$ occur, we shall see in an example below that this will not necessarily happen for the same conjugacy classes, although we note:
\begin{Lem}
  If $\beta_m(z,\chi)\not\in\CF d$ for some $z\in G$ and $\chi\in\Irr(C_G(z))$, then there is a $D(C_G(z))$-module $V$ with $\nu_m(V)\not\in\CF d$.
\end{Lem}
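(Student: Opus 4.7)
The plan is to reduce this to \cref{prop:4} applied to the subgroup $H:=C_G(z)$ rather than to $G$ itself. The key observation, which does all the real work, is that the coefficient $\be m{z,\chi}$ appearing in the hypothesis is an intrinsic invariant of $H$: it equals the corresponding coefficient one would compute inside $H$, where $z$ is now central so that $C_H(z)=H$.

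To see this, first I would note the elementary fact that for any $x\in G$ with $x^m=z$ one automatically has $xz=zx$, hence $x\in H$. Consequently, for every $g\in H$,
\begin{equation*}
  G_m(g,z)=\{x\in G\mid x^m=(gx)^m=z\}=\{x\in H\mid x^m=(gx)^m=z\}=H_m(g,z).
\end{equation*}
Thus the class function $\gamma_m^z\colon C_G(z)\to\CC$ (which is the same as the class function $H\to\CC$, $g\mapsto |H_m(g,z)|$) does not depend on whether we view $z$ as an element of $G$ or of $H$. Since $\Irr(C_G(z))=\Irr(C_H(z))$ and the $\be m{z,\chi}$ are simply the coefficients of $\gamma_m^z$ when expanded in the irreducible characters of $C_G(z)=C_H(z)$, they too agree in the two interpretations.

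With this identification in hand, the hypothesis reads: inside the group $H$, there exist $z\in H$ and $\chi\in\Irr(C_H(z))$ with $\be m{z,\chi}\not\in\CF d$. Applying the contrapositive of \cref{prop:4} to the group $H$ then yields some $g\in H$ and $\eta\in\Irr(C_H(g))$ with $\nu_m(g,\eta)\not\in\CF d$, and the corresponding simple $D(H)=D(C_G(z))$-module is the desired $V$.

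I do not expect any technical obstacle: once one notices that $\beta_m$ is a ``local'' quantity attached to the centralizer, the statement becomes an immediate corollary of \cref{prop:4}. The only point that might deserve a line of justification is why one cannot simply apply \cref{prop:4} to $G$ directly — doing so would produce a $D(G)$-module with irrational indicator rather than a $D(C_G(z))$-module, which is a weaker conclusion than the lemma claims.
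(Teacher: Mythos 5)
Your proposal is correct and follows essentially the same route as the paper: identify $\beta_m^G(z,\chi)$ with $\beta_m^{C_G(z)}(z,\chi)$ (the paper justifies this in one line, ``since $z$ is central in $C_G(z)$'', while you unpack it via $G_m(g,z)=H_m(g,z)$ and the class function $\gamma_m^z$ --- a helpful elaboration but the same observation), and then apply \cref{prop:4} to $H=C_G(z)$.
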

\begin{proof}
  To write the argument properly, we have to introduce the more precise notation $\beta_m^G(z,\chi):=\beta_m(z,\chi)$ to take into account that the centralizer $C_G(z)$ depends on the ambient group, not only on the element $z$. That said, we have $\beta_m^G(z,\chi)=\beta_m^{C_G(z)}(z,\chi)$ since $z$ is central in $C_G(z)$. Thus a module of $D(C_G(z))$ with indicator not in $\CF d$ exists by \cref{prop:4}.
\end{proof}

\begin{Lem}\label{prop:5}
  Let $G$ be a finite group, $z\in G$, $\chi\in\Irr(C_G(z))$, and $m|\exp(C_G(z))/o(z)$. Then $\beta_m(z,\chi)\in\CF{(o(z),m)}$. In particular, if $(o(z),m)\in\{1,2,3,4,6\}$, then $\beta_m(z,\chi)\in\mathbb Q$.
\end{Lem}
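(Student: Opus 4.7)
The plan is to exploit the formula $\beta_m(z,\chi)=|\phi_m^z(\chi)|^2/(|C_G(z)|\chi(e))$ from~\eqref{eq:7} together with the Galois-compatibility $\sigma_r\beta_m(z,\chi)=\beta_m(z^r,\chi)$ from \cref{prop:6}~\cref{item:10}. Set $d=(o(z),m)$. Since $\{\sigma_r:(r,\exp(G))=1,\ r\equiv 1\pmod d\}$ generates $\operatorname{Gal}(\CF{\exp(G)}/\CF d)$, it suffices to prove that $\beta_m(z^r,\chi)=\beta_m(z,\chi)$ for every such $r$; then $\beta_m(z,\chi)$ is Galois-fixed and therefore lies in $\CF d$.

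For the key step, fix such an $r$ and choose $k\in\Z$ with $km\equiv r-1\pmod{o(z)}$, which is solvable precisely because $d=(o(z),m)$ divides $r-1$. I would then show that multiplication by $z^k$ restricts to a bijection between $\{x\in C_G(z):x^m=z\}$ and $\{y\in C_G(z):y^m=z^r\}$: indeed, for any $x\in C_G(z)$ with $x^m=z$ one has $(xz^k)^m=x^m z^{km}=z^{1+km}=z^r$ (using that $z^k$ is central in $C_G(z)$), and the inverse map is $y\mapsto yz^{-k}$. Centrality of $z^k$ in $C_G(z)$ also forces, by Schur's lemma, that $z^k$ acts as a root of unity $\lambda$ in any irreducible representation of $C_G(z)$, whence $\chi(xz^k)=\lambda\chi(x)$. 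Summing over the bijection then yields $\phi_m^{z^r}(\chi)=\lambda\phi_m^z(\chi)$, and $|\lambda|=1$ gives $|\phi_m^{z^r}(\chi)|^2=|\phi_m^z(\chi)|^2$ and hence $\beta_m(z^r,\chi)=\beta_m(z,\chi)$, as required.

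For the ``in particular'' clause, $\beta_m(z,\chi)$ is manifestly real, being a squared modulus divided by positive integers, so it lies in $\CF d\cap\mathbb R$; this intersection equals $\mathbb Q$ precisely when $[\CF d:\mathbb Q]\leq 2$, i.e.\ when $\varphi(d)\leq 2$, which happens exactly for $d\in\{1,2,3,4,6\}$. The only non-routine point is spotting the twist by $z^k$, but once one asks what translate of the solution set of $x^m=z$ lands in the solution set of $y^m=z^r$, the congruence $d\mid r-1$ essentially forces the choice. The hypothesis $m\mid\exp(C_G(z))/o(z)$ does not actually enter this argument; by \cref{prop:3}~\cref{item:3} and \cref{prop:3}~\cref{item:2}, every other case either makes $\beta_m(z,\chi)$ vanish or relates it by an Adams operator to a case of the form considered here, without changing the field $\CF{(o(z),m)}$.
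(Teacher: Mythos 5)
Your proof is correct, but it takes a genuinely different route from the paper's. The paper establishes $\beta_m(z,\chi)\in\CF{(o(z),m)}$ as the intersection of two separately obtained constraints: first, that $\beta_m(z,\chi)\in\CF{o(z)}$, by observing (via \cref{prop:3}~\cref{item:1}) that $\adams r\gamma_m^z=\adams r\gamma_m^{z^r}=\gamma_m^z$ whenever $r\equiv 1\bmod o(z)$; second, that $\beta_m(z,\chi)\in\CF m$, which it deduces from the externally known fact that $m$-th Frobenius--Schur indicators lie in $\CF m\cap\mathbb R$, transported to the $\beta$'s through \cref{prop:4}. The conclusion then follows from $\CF{o(z)}\cap\CF m=\CF{(o(z),m)}$. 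Your argument instead proves Galois invariance over $\CF{(o(z),m)}$ in one stroke, by exhibiting the explicit bijection $x\mapsto xz^k$ (with $km\equiv r-1\bmod o(z)$) between the fibres $\{x\in C_G(z):x^m=z\}$ and $\{y\in C_G(z):y^m=z^r\}$, noting that the central element $z^k$ only multiplies $\phi_m^z(\chi)$ by a root of unity and hence leaves $|\phi_m^z(\chi)|^2$ unchanged. This buys you a self-contained proof that does not appeal to the general theorem that Hopf-algebraic $\nu_m$ lie in $\CF m$, and it makes the appearance of the gcd $(o(z),m)$ transparent (it is precisely the solvability condition for $km\equiv r-1\bmod o(z)$). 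The paper's route is shorter given the machinery already in place (\cref{prop:3}, \cref{prop:4}), but yours is more elementary in its ingredients. Your closing remark is also accurate: neither proof actually uses the hypothesis $m\mid\exp(C_G(z))/o(z)$, which is there only to single out the ``reduced'' cases via \cref{prop:3}.
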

\begin{proof}
  If $r\equiv 1\mod o(z)$ then $\adams r\gamma_m^z=\adams r\gamma_m^{z^r}=\gamma_m^z$ showing $\beta_m(z,\chi)\in\CF{o(z)}$. Since $m$-th indicators are in $\CF m\cap\mathbb R$, we are done.
\end{proof}

\Cref{prop:4} gives a criterion for deciding whether all the higher indicators for the double of a given group are rational, particularly efficient if we use \cref{prop:5} to skip combinations of $m$ and $z$ that are useless to check (which may happen because $o(z)$ is ``too small'' as it needs to have a ``large'' divisor, or because $o(z)$ is ``too large'' because $\exp(C_G(z))/o(z)$ needs to have a ``large'' divisor). We note that the result is related to, but stronger than, results relating the $FSZ_+$-property of a group to the $FSZ$-property of its centralizer subgroups in \cite{MR3275646}. A GAP function performing the test is:

\begin{lstlisting}
FSZtest:=function(G)
    local C,CT,z,cl,div,chi;
    for cl in RationalClasses(G) do
        z:=Representative(cl);
        if Order(z) in [1,2,3,4,6] then continue;
        fi;
        C:=Centralizer(G,z);
        CT:=OrdinaryCharacterTable(C);
        div:=DivisorsInt(Exponent(C)/Order(z));
        div:=Filtered(div,m-> not m in [1,2,3,4,6]);
        for m in div do
            d:=Gcd(m,Order(z));
            if d in [1,2,3,4,6] then continue;
            fi;
            for chi in Irr(CT) do
                if not IsRat(beta(CT,z,m,chi)) then 
                    Print(Order(C),",",m,"\n");
                    return false;
                fi;
            od;
        od;
    od;
    return true;    
end;

\end{lstlisting}

We used it to test simple groups as provided to us by the iterator \lstinline!SimpleGroupsIterator!, skipping the groups $\PSL_2(q)$ for which \cite{MR3275646} already showed that they are $FSZ$ (but not caring to skip other groups treated there). We can report that the exceptional Chevalley group $G_2(5)$ of order $5^6(5^6-1)(5^2-1)=5,859,000,000$ is the only counterexample whose double affords a representation with an irrational (fifth) FS-indicator among the simple groups up to order
$|\PSL(3,29)|=499,631,102,880$ (where we turned to the stronger hardware setup provided by the \emph{Centre de Calcul} of the \emph{Université de Bourgogne} after treating $\PSL(3,27)$ of order $282,027,786,768$ on the above-mentioned laptop.)

While the code included above can report rather swiftly that $G_2(5)$ is not $FSZ$, the code from \cref{sec:implementation} fails to compute the indicators. The reason for this, however, is that we asked GAP to compute the character tables of all the centralizers by its standard methods. This is no problem for all the \emph{nontrivial} centralizers, which have relatively  harmless orders. It is the character table of $G_2(5)$ itself that stalls the computation. Now $G_2(5)$ is an ``atlas group'' whose character table is available from the atlas of finite groups \cite{MR827219}; to the GAP user, it is of course more practical to invoke the character table library package \cite{CTblLib1.2.1} that will load the character table of the group $G_2(5)$ from a database. Since the atlas tables in GAP do not initially know their underlying groups, one has to use the \lstinline!CharacterTableWithStoredGroup! function to explicitly ask for the connection of the table with the group, and this done one has to explicitly enter this character table as the table of the centralizer of the neutral element in the data structures described in \cref{sec:implementation}, before leaving all the rest of the calculation (along with the calculation of the character tables of all the other centralizers) to the code given in that section, which will only take a little more than two minutes of laptop CPU time to complete its job. We summarize some of the results in \cref{fig:indival}, rewriting the integer linear combinations of fifth roots of unity $\zeta_5^k$ output by GAP as elements of $\mathbb Q(\sqrt 5)=\mathbb R\cap \CF 5$, e.g.\ $-60(\zeta_5+\zeta_5^4)-10(\zeta_5^2+\zeta_5^3)=35-25\sqrt 5$
\begin{table}[t]
  \centering
\renewcommand\arraystretch{1.25}
  \begin{tabular}{c||c|c|c|c|c|c}
    $g$ in &$C_G(g)\cong$&$\nu_5(g,\eta)$&$\nu_6(g,\eta)$&multiplicity
    \\\hline\hline
\multirow{7}{*}{10b}&\multirow{7}{*}{\begin{tabular}{c}order\\$600$\\with\\$45$\\conjugacy\\classes\end{tabular}}&81&10621& 5
    \\\cline{3-5}
           &&$35\pm 25\sqrt 5$&21163&5+5
    \\\cline{3-5}
&&
           $116\pm 50\sqrt 5$&31783&5+5\\\cline{3-5}
       &&$70$&$42326$& 5\\\cline{3-5}
&&$ 72$&$ 42330$& 5\\\cline{3-5}
&&$ 152$&$ 52951$& 5\\\cline{3-5}
&&$ 230$&$ 63561$& 5\\\hline\hline
    \multirow{4}{*}{10c}&\multirow{4}{*}{$C_{10}\times C_5$}&$299\pm 25\sqrt 5$&119597&  5+5\\\cline{3-5}
    &&$301\pm 25\sqrt 5$& 119605 &5+5\\\cline{3-5}
    && 424 & 119657 &5\\\cline{3-5}
    && 426 &119665 &5\\\hline\hline

    \multirow{8}{*}{10d}&\multirow{8}{*}{$C_{10}\times C_5$}
                         &$299\pm 25\sqrt 5$&117091& 8+8\\\cline{3-5}
    &&$299\pm 25\sqrt 5$&117093& 2+2\\\cline{3-5}
    &&$301\pm 25\sqrt 5$&117099& 8+8\\\cline{3-5}
    &&$301\pm 25\sqrt 5$&117101& 2+2\\\cline{3-5}
  && 424& 117151& 4\\\cline{3-5}
  && 424& 117153& 1\\\cline{3-5}
  && 426& 117159& 4\\\cline{3-5}
  && 426& 117161& 1\\\hline\hline
%
    \multirow{2}{*}{25a}&\multirow{2}{*}{$C_{25}$}&$(1325\pm 25\sqrt 5)/2$& 237342& 10\\\cline{3-5}
  && 725& 237352& 5\\\hline
  \end{tabular}
  
  \caption{Values of $\nu_5(g,\eta)$ and $\nu_6(g,\eta)$ for $\eta\in\Irr(C_G(g))$ and $g$ from four conjugacy classes of $G_2(G)$; all other classes lead to only rational indicators.}
  \label{fig:indival}

\end{table}

If we had not listed the sixth indicators, classes 10c and 10d would look identical. Thus, certain representations that share the same fifth indicators are distinguished by their sixth indicators. The choice of $\nu_6$ is not completely arbitrary (though theoretically unfounded): The indicators associated to class 10d that share the same fifth indicator also have the same $\nu_m$ for any $m|\exp(G)$ that is a prime power (that is, $m\in\{2,4,8,3,5,25,7,31\}$). We also note that, at least for the classes shown, two objects that have the same fifth and sixth indicators also share identical values for all the other higher indicators; i.e.\ the listed ``multiplicity'' is the multiplicity with which an entire indicator \emph{sequence} occurs in this class.

The four classes listed in \cref{fig:indival} are the only ones (among $44$) for which irrational indicators occur. The irrationality test \lstinline!FSZtest! above, on the other hand, does not compute indicators, but rather the values $\beta_m(z,\chi)$ for characters $\chi$ of $C_G(z)$. The unique conjugacy class $z^{G_2(5)}$ for which an irrational value $\beta_m(z,\chi)$ occurs is the one labeled 5a. For the various characters of $C_{G_2(5)}(z)$ we get the following values of $\beta_5(z,\chi)$: $0$ occurs $58$ times, $150$ and $(175\pm 25\sqrt 5)/2$ occur twice each, and $450\pm 150\sqrt 5$, $300\pm 50\sqrt 5$, $25$, $100$, and $600$ occur once each. We have $|C_{G_2(5)}|=375,000=2^3\cdot 3\cdot 5^6$; the first counterexample to the $FSZ$-property found in \cite{MR3275646} is for a group of order $5^6$. In fact, one can (GAP) check that the 5-Sylow Subgroups of $G_2(5)$ are not $FSZ$. One might suspect that the $FSZ_p$ property of a group might be related to the $FSZ_p$ property for its $p$-subgroups, but perhaps the scarcity of known non-$FSZ$ groups makes this a quite speculative question; we can merely prove the rather weak:
\begin{Lem}
  Let $q$ be a power of a prime $p$ and $G$ a finite group.
  \begin{enumerate}
  \item If $G$ is not $FSZ_{q}$, then there is a $p$-element $z\in G$ such that $C_G(z)$ is not $FSZ_q$.
  \item If $y\in Z(G)$, and $y_p$ denotes the $p$-part of $y$ then $\beta_q(y,\chi)=\beta_q(y_p,\chi)$.
  \end{enumerate}
\end{Lem}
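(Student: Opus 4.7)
The plan is to prove part (2) by a direct substitution that translates the summation defining $\phi_q^y(\chi)$ into one over solutions of $x^q=y_p$, and then deduce part (1) from \cref{prop:4} together with the identification $\beta_m^G(z,\chi)=\beta_m^{C_G(z)}(z,\chi)$ furnished by the Lemma stated just above.

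For part (2), since $y\in Z(G)$ both $y_p$ and $y_{p'}$ (each a power of $y$) are central in $G$. Let $n:=o(y_{p'})$, which is coprime to $p$, and choose $s$ with $qs\equiv 1\pmod n$; then $y_{p'}^{qs}=y_{p'}$. The centrality of $y_{p'}^s$ makes $x\mapsto x\cdot y_{p'}^{-s}$ a bijection
\[
\{x\in G : x^q=y\}\longrightarrow\{x\in G : x^q=y_p\},
\]
since $(x\cdot y_{p'}^{-s})^q = x^q\cdot y_{p'}^{-qs} = y\cdot y_{p'}^{-1} = y_p$. By Schur's Lemma applied to the central element $y_{p'}^s$,
\[
\chi(x) \;=\; \frac{\chi(y_{p'}^s)}{\chi(e)}\,\chi\bigl(x\cdot y_{p'}^{-s}\bigr),
\]
so $\phi_q^y(\chi) = \omega\,\phi_q^{y_p}(\chi)$ with $\omega=\chi(y_{p'}^s)/\chi(e)$ a root of unity. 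Since $C_G(y)=C_G(y_p)=G$, taking absolute values and applying formula \eqref{eq:7} yields $\beta_q(y,\chi)=\beta_q(y_p,\chi)$.

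For part (1), assume $G$ is not $FSZ_q$. By \cref{prop:4} there exist $z_0\in G$ and $\chi\in\Irr(C_G(z_0))$ with $\beta_q^G(z_0,\chi)\notin\mathbb{Q}$. Set $w:=(z_0)_p$, a $p$-element of $G$. Since $w\in\langle z_0\rangle$, every element centralizing $z_0$ also centralizes $w$, giving $C_G(z_0)\subseteq C_G(w)$ and hence $C_{C_G(w)}(z_0)=C_G(z_0)$. The Lemma preceding our statement, applied first in the ambient group $G$ and then in the ambient group $C_G(w)$, then yields
\[
\beta_q^{C_G(w)}(z_0,\chi) \;=\; \beta_q^{C_G(z_0)}(z_0,\chi) \;=\; \beta_q^G(z_0,\chi)\;\notin\;\mathbb{Q},
\]
so \cref{prop:4} applied to $C_G(w)$ shows that $C_G(w)$ is not $FSZ_q$.

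The only slightly delicate step is the twist bijection in (2); verifying it requires only the centrality of $y_{p'}$ (equivalently of $y_{p'}^s$) together with the congruence $qs\equiv 1\pmod{o(y_{p'})}$, which uses crucially that $q$ is a $p$-power and $o(y_{p'})$ is coprime to $p$. Everything else reduces to Schur's Lemma and the definitional identities from \cref{prop:2} and \cref{prop:4}.
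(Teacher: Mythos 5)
Your proof is correct and follows essentially the same approach as the paper's. For part~(1) you invoke the identity $\beta_q^{G}(z,\chi)=\beta_q^{C_G(z)}(z,\chi)$ together with $C_{C_G(w)}(z_0)=C_G(z_0)$ for $w=(z_0)_p$, which is exactly the paper's argument (the paper writes the observation once, you route it through the preceding lemma twice, but it is the same fact that $\beta_q$ depends only on the centralizer of $z$ and on $\chi$). For part~(2) the paper decomposes a solution $x$ of $x^q=y$ into its $p$- and $p'$-parts and argues the $p'$-part is forced; your explicit bijection $x\mapsto x\cdot y_{p'}^{-s}$ with $qs\equiv 1\pmod{o(y_{p'})}$ realizes the same fiber translation more cleanly (and sidesteps what appears to be a small typo in the paper's exponent), arriving at the same conclusion $\phi_q^y(\chi)=\omega\,\phi_q^{y_p}(\chi)$ for a root of unity $\omega$, after which \eqref{eq:7} and $C_G(y)=C_G(y_p)=G$ finish the argument just as in the paper.
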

\begin{proof}
  Let $y$ be such that $\beta_q(y,\chi)=\beta_q^G(y,\chi)\not\in\mathbb Q$. Let $z$ be the $p$-part of $y$. Since $C_{C_G(z)}(y)=C_G(y)$, we have $\beta_q^{C_G(z)}(y,\chi)=\beta_q^G(y,\chi)$, and thus $C_G(z)$ is not $FSZ$.

  If $y\in Z(G)$, then $y_p$ as well as the $p'$-part $y_{p'}$ of $y$ are central. Let
  $\exp(G)=p^kn$ with $(n,p)=1$, and $qr\equiv 1\mod n$. Let $x\in G$ with $p$-part $x_p$ and $p'$-part $x_{p'}$. Then $x^q=z$ is equivalent to $x_p^q=z_p$ and $x_{p'}^q=y_{p'}$. Since $x_{p'}$ and $y_{p'}$ are $p'$-elements, $x_{p'}^q=y_{p'}$ implies  $x_{p'}=y_{p'}^n=:a$; on the other hand $a$ is central in $G$ and thus $(ab)^q=y$ is equivalent to $b^q=y_p$ for $p$-elements $b$. Therefore
  \begin{multline*}
    \langle\chi,w_q^y\rangle=\sum_{x^q=y}\chi(x)
                            =\sum_{b^q=y_p}\chi(ab)
                            =\sum_{b^q=y_p}\frac{\chi(a)}{\chi(e)}\chi(b)
    =\frac{\chi(a)}{\chi(e)}\langle\chi,w_q^{y_p}\rangle,
  \end{multline*}
  where $\chi(a)/\chi(e)$ is a root of unity.
\end{proof}
The lemma implies that a non-$FSZ_q$-group $G$ has a $p$-element $z$ such that $C_G(z)$ has a central $p$-element $y$ such that $\beta_q^{C_G(z)}(y,\chi)$ is irrational for some $\chi\in\Irr(C_G(z))$.

\printbibliography

\end{document}